\tikzset{degil/.style={line width=0.5pt,double distance=5pt,
        decoration={markings,
        mark= at position 0.5 with {
              \node[transform shape] (tempnode) {$\backslash\backslash$};
              }
          },
          postaction={decorate}
}
}
\tikzset{
commutative diagrams/.cd,
arrow style=tikz,
diagrams={>=open triangle 45, line width=0.5pt}}
\newcommand{\myDots}{\hbox to 0.9em{.\hss.\hss.}}
\definecolor{green1}{rgb}{0.0, 0.5, 0.0}
\theoremstyle{plain}
\newtheorem{thm}{Theorem}
\newtheorem{lemma}{Lemma}
\newtheorem{prop}{Proposition}
\newtheorem{cor}{Corollary}
\theoremstyle{definition}
\newtheorem{fact}{Fact}
\newtheorem{defn}{Definition}
\theoremstyle{remark}
\newtheorem{rem}{Remark}
\newcommand{\R}{\mathbb{R}}{}
\newcommand{\N}{\mathbb{N}}
\newcommand{\bS}{\mathbb{S}}
\newcommand{\B}{\mathbb{B}}
\newcommand{\A}{\mathbb{A}}
\newcommand{\cA}{\mathcal{A}}
\newcommand{\cC}{\mathcal{C}}
\newcommand{\cK}{\mathcal{K}}
\newcommand{\cG}{\mathcal{G}}
\newcommand{\M}{\langle M \rangle}
\newcommand{\wi}{\hat {w}}
\newcommand{\wj}{\hat {v}}
\newcommand{\cF}{\mathcal{F}}
\newcommand{\cS}{\mathcal{S}}
\newcommand{\cW}{\mathcal{W}}
\newcommand{\wt}{\widetilde}
\newcommand{\toS}{\rightrightarrows}
\newcommand{\re}{{\rm rem}}
\newcommand{\eS}{{\bf \epsilon}}
\newcommand{\cut}{{\rm cut}}
\DeclareMathOperator*{\argmin}{argmin}
\DeclareMathOperator*{\mspan}{span}
\newsavebox\myboxA
\newsavebox\myboxB
\newlength\mylenA
\newcommand*\pbar[1]{%
  \hbox{%
     \vbox{%
      \hrule height 0.7pt 
      \kern0.35ex
      \hbox{%
         \kern-0.0em
         \ensuremath{#1}%
         \kern-0.0em
      }%
     }%
  }%
} 
\newcommand*\xbar[2][0.75]{%
    \sbox{\myboxA}{$\m@th#2$}%
    \setbox\myboxB\null
    \ht\myboxB=\ht\myboxA%
    \dp\myboxB=\dp\myboxA%
    \wd\myboxB=#1\wd\myboxA
    \sbox\myboxB{$\m@th\pbar{\copy\myboxB}$}
    \setlength\mylenA{\the\wd\myboxA}
    \addtolength\mylenA{-\the\wd\myboxB}%
    \ifdim\wd\myboxB<\wd\myboxA%
       \rlap{\hskip 0.5\mylenA\usebox\myboxB}{\usebox\myboxA}%
    \else
        \hskip -0.5\mylenA\rlap{\usebox\myboxA}{\hskip 0.5\mylenA\usebox\myboxB}%
    \fi}
\title{Feedback stabilization of switched systems under arbitrary switching:\\A convex characterization}
\author{Thiago Alves Lima \thanks{All authors contributed equally to this work. \\T.A.L. is with the Aeronautics Institute of Technology (ITA), Fortaleza, 60415-513, CE, Brazil. M.D.R is with the Department of Electronics and Telecommunications, Politecnico di Torino, Corso Duca degli Abruzzi, 24, 10129 Torino, Italy.  A.G. is with the Université Paris-Saclay, CNRS, CentraleSupélec, Laboratoire des Signaux et Systèmes, 91190, Gif-sur-Yvette, France.\\
        \textit{Email addresses:} \textbf{\scriptsize thiago.lima@gp.ita.br} (T. Alves Lima), \textbf{\scriptsize matteo.dellarossa@polito.it} (M. Della Rossa), \textbf{\scriptsize antoine.girard@centralesupelec.fr} (A. Girard).} \and Matteo Della Rossa \and Antoine Girard}
\begin{document}

\maketitle    

 \begin{abstract}
In this paper, we study stabilizability of discrete-time switched linear systems where the switching signal is considered as an arbitrary external input (and not a control variable). We characterize feedback stabilization via \textcolor{black}{a hierarchy} of necessary and sufficient linear matrix inequalities (LMIs) conditions based on novel graph structures. We analyze both the cases in which the controller has (or has not) access to the current switching mode, the so-called mode-dependent and mode-independent settings, providing specular results. Moreover, our approach provides 
explicit piecewise-linear and memory-dependent linear controllers, 
highlighting the connections with existing stabilization approaches. The effectiveness of the proposed technique is finally illustrated with the help of some numerical examples.
 \end{abstract}

\section{Introduction}

 In this paper, given a \textcolor{black}{natural number $M>0$}  and pairs  $(A_i,B_i)\in \R^{n\times n}\times \R^{n\times m}$ with $ i\in \{1,\dots,M\}$, we study the \emph{(discrete-time) control switched linear system} defined~by
\begin{equation}\label{eq:SwitchedSystemInputIntro}
x(k+1)=A_{\sigma(k)}x(k)+B_{\sigma(k)}u(k),\;\;\;k\in \N,
\end{equation}
\textcolor{black}{where $\N$ denotes the set of natural numbers including~$0$.}
For a general introduction to switched dynamical systems, we refer to~\cite{Lib03} and references therein.
In equation~\eqref{eq:SwitchedSystemInputIntro}, $x:\N\to \R^n$, constrained to an initial conditions of the form $x(0)=x_0\in \R^n$ is the (unique) internal \emph{state}. Then, system~\eqref{eq:SwitchedSystemInputIntro} can be interpreted as a model with two different kinds of external inputs: a classical input  $u:\N\to \R^m$ and a switching-signal input $\sigma:\N\to \{1,\dots, M\}$ which takes values in a discrete set. Both these inputs can be considered, depending on the modeling requirements, as external \emph{noises/disturbances}, unmodifiable by the user, or as external \emph{control inputs} that the user might choose in order to achieve a prescribed task (e.g., reachability, minimization of a cost functional, etc.). 

A classical question in this context is the \emph{stabilization problem}, which  requires to establish the  existence and, when possible, the construction, of inputs $u$ and/or $\sigma$ driving any initial state to $0$. For the case in which $u\equiv 0$ and thus the only control input is the signal $\sigma:\N \to \{1,\dots, M\}$, a mature literature is already available, see~\cite{FiaGir16,FiaJun14,JunMas17,DettJun20}. For the case in which \emph{both} $u:\N\to \R^m$ and $\sigma:\N\to \{1,\dots, M\}$ are designed to achieve stabilization, we \textcolor{black}{refer to~\cite{LinAnt08,LinAnt09,ZhangAbate09,FiaTar17}} in which sufficient conditions are provided, \textcolor{black}{relying} on the design of ``joint''-control Lyapunov functions.

\textcolor{black}{In this paper, instead, we consider the case in which the switching signal is arbitrary and unmodifiable by the user}. The input $u:\N\to \R^m$ is then the unique control input to be designed to achieve  stability of~\eqref{eq:SwitchedSystemInputIntro}, \emph{uniformly} over the class of switching signals.
This problem has been tackled in a closely related setting, namely for \emph{linear parameter-varying} (LPV) systems, which can be considered as a ``convexified'' version of~\eqref{eq:SwitchedSystemInputIntro}; for a general introduction see~\cite{BlaMia03}. Stabilization results in this framework have been provided in~\cite{BlaMia03,GoeHu06,BlaMia2008,HuBla10}, notably establishing that the use of \emph{linear} feedback controllers is generally restrictive and thus proposing the design of \emph{piecewise linear} feedback controllers. These results apply, under some hypothesis, also to the switched systems as in~\eqref{eq:SwitchedSystemInputIntro}.
Indeed, substantially confirming the results obtained in the LPV setting while leading to novel (nonlinear) feedback design techniques, the same stabilization problem has been studied, in a genuinely switched systems setting, in~\cite{HuShen17,LeeDull06,LeeKha09}.

In this paper, we propose classes of controllers in feedback form in two different frameworks: firstly, considering feedback maps depending, at each time-instant $k\in \N$, only on the current system state $x(k)$ (\emph{mode-independent feedback}), and secondly considering control schemes depending on the current state $x(k)$ and on the current switching mode $\sigma(k)$ (\emph{mode-dependent feedback}).
In both cases, we introduce and study  notions of \emph{feedback stabilizability} for~\eqref{eq:SwitchedSystemInputIntro}.
Since in the studied setting the switching signal $\sigma$ is arbitrary, we make use of a \emph{difference inclusion} representation of~\eqref{eq:SwitchedSystemInputIntro},  allowing us to provide a complete characterization of the studied notions of stabilizability. We prove an equivalence result, informally stated here:
\vspace{-0.1cm}
\begin{center}
\justify{\emph{System~\eqref{eq:SwitchedSystemInputIntro} is (robustly) feedback stabilizable if and only if there exist \emph{piecewise} linear feedback controllers (and associated \emph{piecewise} quadratic Lyapunov function) obtained via \textcolor{black}{an asymptotically tight hierarchy of} \emph{graph-based} linear matrix inequalities (LMIs).}}\\
\end{center}
\vspace{-0.1cm}
\textcolor{black}{The framework of graph-based stabilization design leading to piecewise linear control laws has been recently proposed in~\cite{DelRosAlv24}; however sufficient-only LMI conditions were presented.} The strategy was developed by leveraging the connections between path-complete Lyapunov functions (see~\cite{AhmJun:14}) and min-max of quadratics Lyapunov functions, which had been previously explored only in the context of \emph{stability analysis}~\citep{PhiAthAng}.
\textcolor{black}{Although} the sufficient conditions obtained in~\cite{DelRosAlv24} (based on the so-called De Bruijn graphs) were conjectured to also be \emph{necessary}, this  remains unproven. \textcolor{black}{Motivated by this fundamental question,} in this manuscript, we propose a different hierarchy of LMI conditions based on a novel class of graphs, called \emph{feedback trees}. We prove that the satisfiability of this new class of conditions is \emph{both sufficient and necessary} for the studied notions of stabilizability, \textcolor{black}{thus providing, for the first time, an equivalence (i.e., a complete characterization) in terms of LMIs, in contrast with existing results in the literature. For instance, the results in~\cite{BlaMia03,BlaMia2008,HuBla10} rely either on polyhedral Lyapunov functions and/or on BMI formulations, in contrast to LMIs which can be solved in polynomial time.}

A central instrumental result in proving such equivalence is the fact that feedback stabilizability of~\eqref{eq:SwitchedSystemInputIntro} is structurally \emph{robust}, in a sense we will clarify. Such achievements are obtained with formal and technical tools not used in~\cite{DelRosAlv24}, as converse Lyapunov functions theory for difference inclusions~\citep{KELLETT2004395,KellTeel} and homogeneity. 

\textcolor{black}{
While establishing the necessity of the newly proposed conditions, we also obtain a significant intermediate result: these conditions guarantee stabilizability of~\eqref{eq:SwitchedSystemInput} by \emph{linear memory-dependent controllers}, i.e., linear controllers that may utilize the entire history of the switching signal. In this context, the external switching signal remains arbitrary. However, in the \emph{mode-independent case}, the controller can store and leverage past values of the signal for stabilization, whereas in the \emph{mode-dependent case}, both the \emph{current and past values} can be utilized.} \textcolor{black}{Such memory-dependent \emph{linear} controllers are explicitly provided by the proposed numerical conditions, allowing for an alternative feedback implementation.  
In this construction, we make partial use of results concerning dynamical systems depending on a notion of language-theory memory, as introduced in~\citep{DelRosJung24}.}
Moreover, this allows us to clarify the relations between our results and the ones concerning memory-dependent linear controllers, as for example the ones provided in~\citep{BLIMAN2003325,LeeKha09,EssLee14} and references therein.
This final analysis allows us to provide an almost complete picture of the connections between stabilizability notions and techniques for~\eqref{eq:SwitchedSystemInputIntro}.

\noindent \textbf{Notation:} We denote by $\N$ (resp. $\N_+$) the set of natural numbers including (resp. excluding) $\{0\}$. Given $M\in \N_+$, we define $\M:=\{1,\dots, M\}$. Given a finite set $\Sigma$ (the \emph{alphabet)} and $T\in \N$, we denote by $\Sigma^T$ the set of strings of length $T$ in $\Sigma$, and we denote its elements by $\wi=(i_0,\dots, i_{T-1})\in \Sigma^T$.
 By convention, $\Sigma^0=\{ \bf \epsilon\}$ where ``$\bf \epsilon$'' is an auxiliary symbol, which stands for ``empty string''. 
 With $\Sigma^\star:=\bigcup_{T\in \N}\Sigma^T$ we denote the \emph{Kleene closure} of $\Sigma$, that is, the set of all finite-length strings in $\Sigma$.
 With $\Sigma^\omega$ we denote the \emph{$\omega$-closure} of $\Sigma$, i.e. the set of all the (infinite) sequences in $\Sigma$. Given any $\sigma \in \Sigma^\omega$ and any $a\leq b\in \N$ we denote by $[\sigma]_{[a,b]}\in \Sigma^{b-a+1}$ the \emph{restriction of $\sigma$ to the interval $[a,b]$}, i.e. $[\sigma]_{[a,b]}=(\sigma(a),\sigma(a+1),\dots, \sigma(b))\in \Sigma^{b-a+1}$. Given $x\in\R^n$, with $|x|$ we denote its Euclidean norm,  with $\B$ we denote the  closed unit ball, i.e., $\B:=\{x\in \R^n\;\vert\;|x|\leq 1\}$.
 We denote by $\bS^{n\times n}$ the set of the $n\times n$ symmetric matrices, and by  $\bS_+^{n\times n}$  the set of $n\times n$ positive definite matrices. A function $U:\R^n\to \R$ is \emph{positive definite} if $U(0)=0$ and $U(x)>0$ for all $x\neq 0$.
Given $p\in \N$, a function $\Psi:\R^n\to \R^m$ is  \emph{homogeneous of degree $p$} if
$\Psi(\lambda x)=\lambda^p \Psi(x)$, for all $\lambda\in \R$ and all $x\in \R^n$.
It is  \emph{absolutely homogeneous of degree 
p} if $\Psi(\lambda x)=|\lambda|^p \Psi(x)$, for all $\lambda\in \R$ and all $x\in \R^n$.
\section{Preliminaries}\label{sec:Prel}
In this section we introduce some preliminary definitions and results that will be used in what follows.
\subsection{Difference inclusions and robust stability notions}\label{sec:SetValuedStability}
We recall the main notation and definitions concerning \emph{difference inclusions}. For a 
self-contained introduction to set-valued analysis, we refer, for example, to~\cite{RockWets09}.
The notation $F:\R^n\rightrightarrows\R^n$ stands for the fact that $F$ is a \emph{set-valued map} that sends points of $\R^n$ to \emph{subsets} of $\R^n$.
 It is said to be \emph{homogeneous of degree $1$} if, for any $\lambda\in \R$ and any $x\in \R^n$, it holds that $F(\lambda x)=\lambda F(x)$.
Given $A\subseteq \R^n$, we write $F(A):=\bigcup_{x\in A}F(x)$.
Given $F:\R^n\rightrightarrows \R^n$, we consider the \emph{difference inclusion}
\begin{equation}\label{eq:DifferenceInclusion}
x(k+1)\in F(x(k)).
\end{equation}
If $F$ is non-empty valued, for any initial condition $x(0)=x_0\in \R^n$, there exist  \emph{global solutions} to~\eqref{eq:DifferenceInclusion}, i.e.,  functions $\phi:\N\to \R^n$ such that $\phi(0)=x_0$ and $\phi(k+1)\in F(\phi(k))$ for all $k \in \N$.
Given $x_0\in \R^n$ the set of global solutions to~\eqref{eq:DifferenceInclusion} with initial condition $x_0\in \R^n$ is denoted by $\cS_{F,x_0}$. 

\begin{defn}\label{defn:WeakStability}
The difference inclusion in~\eqref{eq:DifferenceInclusion} is said to be \emph{uniformly exponentially stable} (UES) if there exists $C\geq 0$ and $\gamma\in [0,1)$ such that $|\phi(k)|\leq C\gamma^k |x_0|$, for all $x_0\in \R^n$, for all $\phi\in \cS_{F,x_0}$ and for all $k\in \N$.
\end{defn}
The stability notion introduced in~Definition~\ref{defn:WeakStability} is too ``weak'' or ``delicate'' for certain purposes. We then strengthen it, to obtain a \emph{robust} version of it; to do so, we introduce a class of ``\emph{enlargements}'' for the map~$F$. 
\begin{defn}[\cite{KELLETT2004395}, Eq. (1)]\label{defn:perturbation}
Given $F:\R^n \toS \R^n$ and a continuous  $\rho:\R^n\to \R_+$, the \emph{$\rho$-perturbation of $F$} is given by $F_\rho:\R^n\toS \R^n$ defined by
\[
F_\rho(x):=F(x+\rho(x)\B)+\rho(x)\B.
\]
\end{defn}
In order to avoid misunderstanding, we note that this definition of perturbation, borrowed from~\cite{KELLETT2004395} differs from the one in~\cite[Equation~(2.2)]{KellTeel}. In these references, under some tailored assumptions on $F$, it is implicitly proven that these two notions perturbations lead to the same concept of robustness, roughly speaking. We can now state the following robust notion of stability.
\begin{defn}\label{defn:robustStability}
The inclusion~\eqref{eq:DifferenceInclusion} is  \emph{robustly uniformly exponentially  stable} (RUES) if there exists a continuous $\rho:\R^n\to \R_+$ such that $\rho(x)>0$ for all $x\neq 0$ for which the inclusion
$x(k+1) \in F_\rho(x(k))$ is UES.
\end{defn}

We recall the characterization of (robust) stability via Lyapunov functions; we first need the following definition.

\begin{defn}[(Exponential) Lyapunov functions]\label{defn:LyapunovExponential}
A function $V:\R^n\to \R$ is said to be an \emph{exponential Lyapunov function} for the inclusion~\eqref{eq:DifferenceInclusion} is there exist $a_1,a_2\in \R_{>0}$ and $\gamma\in [0,1)$ such that
\[
\begin{aligned}
a_1|x|\leq & V(x)\leq a_2|x|,\;\;\;\forall \,x\in \R^n,\\
V(f)\leq\gamma &V(x),\;\;\;\forall x\in \R^n,\;\;\forall \,f\in F(x). 
\end{aligned}
\]
\end{defn}
We now recall a pivotal result relating robust stability and existence of continuous Lyapunov functions.
\begin{lemma}\label{lemma:technicalLemma}
Suppose $F:\R^n \toS \R^n$ is non-empty valued.
If there exists a \emph{continuous} exponential Lyapunov function for~\eqref{eq:DifferenceInclusion}, then the inclusion is RUES.
\end{lemma}

This is the ``exponential version'' of the result proven in~\cite[Section 6.1]{KELLETT2004395}, since the Lyapunov functions as in Definition~\ref{defn:LyapunovExponential} satisfy \emph{linear} lower and upper bounds and a \emph{linear} decrease condition. The (straightforward) proof is thus not explicitly reported. Theorem 10 in~\cite{KELLETT2004395} actually shows the equivalence between RUES and the existence of a continuous exponential Lyapunov function under some additional regularity assumptions on $F$ (compact-values and upper semicontinuity). Note that in this manuscript, we only use the necessity part of this theorem, stated above in Lemma 1, which holds for arbitrary set-valued maps $F$ as explicitly outlined in~\cite[Section 6.1]{KELLETT2004395}.

\section{Discrete-time control switched systems}\label{sec:Prel2}
In this section we provide the definition of the considered class of systems and the
studied stabilization notions.
\subsection{Definitions and robust stabilizability notions}
Given $M\in \N_+$ and a set  $\cF=\{(A_i,B_i)\in \R^{n\times n}\times \R^{n\times m}\;\vert\;i\in \M\}$, we study the \emph{discrete-time control switched system} defined by
\begin{equation}\label{eq:SwitchedSystemInput}
x(k+1)=A_{\sigma(k)}x(k)+B_{\sigma(k)}u(k),\;\;\;k\in \N,
\end{equation}
where $\sigma:\N\to \M$ (equivalently, $\sigma\in \M^\omega$) is the \emph{switched signal} and $u:\N\to \R^m$ is a control input.

\textcolor{black}{While $u:\N\to \R^m$ is supposed to be a control input given/chosen by the user, the switching signal $\sigma:\N\to \M$ is considered to be \emph{arbitrary} and \emph{unmodifiable} by the user. 
We now introduce the considered notions of stability for~\eqref{eq:SwitchedSystemInput}.}

\begin{defn}\label{defn:StabNot}
System~\eqref{eq:SwitchedSystemInput} is said to be
\begin{enumerate}[leftmargin=*]
\item \emph{Mode-independent feedback stabilizable} (IFS), 
if there exists  $\Phi:\R^n \to \R^m$ such that, considering $F^{\Phi}:\R^n\toS \R^n$ defined by
$F^\Phi(x)=\{A_ix+B_i\Phi(x)\;\vert\;\;i\in \M\}$,
the closed-loop difference inclusion
\begin{equation}\label{eq:modeIndSys}
x(k+1)\in F^{\Phi}(x(k)) 
\end{equation}
is UES.
\item \emph{Mode-dependent feedback stabilizable} (DFS), 
if there exist  $\Phi_1,\dots, \Phi_M:\R^n \to \R^m$ such that, considering $F_d^{\Phi}:\R^n\toS \R^n$ defined by $F_d^\Phi(x)=\{A_ix+B_i\Phi_i(x)\;\vert\;\;i\in \M\}$,
the closed-loop difference inclusion
\begin{equation}\label{eq:modeDepSys}
x(k+1)\in F_d^{\Phi}(x(k)) 
\end{equation}
is UES.
\end{enumerate}
\end{defn}
Intuitively, system~\eqref{eq:SwitchedSystemInput} is  \emph{mode-independent} feedback stabilizable if there exists a \emph{common} feedback law which stabilizes the system, regardless the underlying switching sequence; the user/controller  does not need to observe the active mode of the switching sequence. On the other hand, the system is \emph{mode-dependent} feedback stabilizable if there exist $M$ feedback maps, one for each mode; in this setting the user/controller needs to observe the current active mode, in order to apply the correct feedback law.
\textcolor{black}{
It is thus clear that IFS implies DFS. The other implication is not true. For an example of a system that is DFS but not IFS we refer to~\cite[Example 5.1]{BlaMiaSav07}.
We now recall a result, stating   that the concepts of stabilizability introduced in~Definition~\ref{defn:StabNot} inherently embodies a form of implicit \emph{robustness} and that, without loss of generality, one can consider \emph{homogeneous of degree $1$} controllers.}
\begin{thm}
\label{thm:HomogenenousCharachterization}
Given $M\in \N_+$ and a set $\cF=\{(A_i,B_i)\in \R^{n\times n}\times \R^{n\times m}\;\vert\;i\in \M\}$, system~\eqref{eq:SwitchedSystemInput} is:
\begin{itemize}
    \item[(A)] 
    IFS if and only if there exists 
 a homogeneous of degree~$1$ feedback control $\Phi:\R^n \to \R^m$ such that the closed-loop~\eqref{eq:modeIndSys} is RUES.\label{item:APropBlanchini}
    \item[(B)] DFS if and only if there exist homogeneous of degree~$1$ feedback controls $\Phi_1,\dots,\Phi_M:\R^n\to \R^m$ such that the closed-loop~\eqref{eq:modeDepSys} is RUES. \label{item:BPropBlanchini}
\end{itemize}
\end{thm}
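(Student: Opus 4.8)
My plan is to prove part~(A) in detail, part~(B) being symmetric. The implication from right to left is immediate: if $\Phi$ is homogeneous of degree~$1$ and the closed loop~\eqref{eq:modeIndSys} is RUES with perturbation $\rho$, then $x\in x+\rho(x)\B$ and $0\in\rho(x)\B$ give $F^\Phi(x)\subseteq F^\Phi_\rho(x)$, so every solution of~\eqref{eq:modeIndSys} is a solution of $x^+\in F^\Phi_\rho(x)$ and inherits its UES estimate; hence \eqref{eq:SwitchedSystemInput} is IFS. All the content is in the converse, and the idea is: from an arbitrary stabilizing feedback, manufacture a \emph{continuous} homogeneous control-Lyapunov function, then read off a homogeneous feedback from it and invoke Lemma~\ref{lemma:technicalLemma}.

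So assume~\eqref{eq:SwitchedSystemInput} is IFS, witnessed by $\Phi_0$ with $F^{\Phi_0}$ UES, say $|\phi(k)|\le C_0\gamma_0^k|x_0|$ with $C_0\ge1$, $\gamma_0\in[0,1)$. First I would \emph{normalize} $\Phi_0$: UES forces $|A_ix+B_i\Phi_0(x)|\le C_0\gamma_0|x|$ for all $i\in\M$, $x\in\R^n$, so replacing $\Phi_0(x)$ by a minimum-norm control producing the same increments $(B_i\Phi_0(x))_{i\in\M}$ — components of $u$ in $\bigcap_i\ker B_i$ being dynamically inert — one may assume $\bigcap_i\ker B_i=\{0\}$ and $\Phi_0$ locally bounded, hence $F^{\Phi_0}$ locally bounded. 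Next, fixing $\gamma\in(\gamma_0,1)$, I introduce the control-value function
\[
V(x):=\inf_{\pi}\ \sup_{\sigma\in\M^\omega}\ \sup_{k\in\N}\ \gamma^{-k}\,\bigl|x^{\sigma,\pi}(k,x)\bigr|,
\]
the infimum running over causal state-feedback policies $\pi$, with $x^{\sigma,\pi}(\cdot,x)$ the corresponding closed-loop trajectory from $x$ under the switching signal $\sigma$. Taking $\pi\equiv\Phi_0$ gives $V(x)\le C_0|x|$, the term $k=0$ gives $V(x)\ge|x|$, scaling the initial condition and the policy shows $V$ is absolutely homogeneous of degree~$1$, and a dynamic-programming argument — legitimate precisely because $u(k)$ is committed before $\sigma(k)$, which is the structure $x^+\in F(x,u)$ — yields
\[
V(x)=\max\Bigl\{\,|x|,\ \gamma^{-1}\inf_{u\in\R^m}\max_{i\in\M}V(A_ix+B_iu)\,\Bigr\},\qquad x\in\R^n,
\]
so in particular $\inf_u\max_i V(A_ix+B_iu)\le\gamma V(x)$: $V$ is a (possibly discontinuous) homogeneous control-Lyapunov function.

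The crux is to regularize $V$ into a \emph{continuous}, absolutely homogeneous of degree~$1$ control-Lyapunov function $\widehat V$, i.e.\ one with $a_1|x|\le\widehat V(x)\le a_2|x|$ and $\inf_u\max_i\widehat V(A_ix+B_iu)\le\widehat\gamma\,\widehat V(x)$ for some $a_1,a_2>0$, $\widehat\gamma\in[0,1)$. I expect this to be the main obstacle: naive single-step fixes (mollifying $V$, or homogenizing the feedback $\Phi_0$) do not preserve the strict decrease, because the gap $a_2/a_1>1$ between the two sides of the sandwich is destructive over one step and there need be no \emph{linear} feedback contracting all pairs $(A_i,B_i)$ simultaneously. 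The route I would take is through converse-Lyapunov theory for difference inclusions~\cite{GeiGie14,KellTeel} — exploiting that the controlled inclusion $F(x,u)=\{A_ix+B_iu\mid i\in\M\}$ is itself continuous (regular), so that its uniform exponential stabilizability provides a continuous Lyapunov-type certificate — together with the homogeneity constructions of~\cite{Poly20} to keep that certificate homogeneous, absorbing the $a_2/a_1$ gap by working over a horizon $T$ large enough that $\gamma^{T}$ is small.

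Granted such a $\widehat V$, the conclusion is routine. Fix $\widehat\gamma<\widehat\gamma'<1$ and $L>0$ so large that
\[
S(x):=\bigl\{u\in\R^m\ \big|\ |u|\le L|x|,\ \max_i\widehat V(A_ix+B_iu)\le\widehat\gamma'\,\widehat V(x)\bigr\}
\]
is nonempty for every $x$ — possible since $\max_i\widehat V(A_ix+B_iu)\ge a_1\max_i|A_ix+B_iu|\to\infty$ as $|u|\to\infty$ (using $\bigcap_i\ker B_i=\{0\}$), and, by homogeneity, one $L$ works at all $x$. Continuity of $\widehat V$ makes $S$ outer semicontinuous with compact values and homogeneous of degree~$1$; hence $G(x):=\{A_ix+B_iu\mid i\in\M,\ u\in S(x)\}$ is nonempty-valued, closed, locally bounded, homogeneous of degree~$1$, and admits $\widehat V$ as a continuous exponential Lyapunov function, so $G$ is UES by Lemma~\ref{lemma:technicalLemma}(1) and RUES by Lemma~\ref{lemma:technicalLemma}(2). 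Finally pick \emph{any} selection $\Phi$ of $S$ that is homogeneous of degree~$1$ (prescribe it on the unit sphere, using that $\widehat V$ is even to make it odd there, and extend radially); then $F^\Phi(x)\subseteq G(x)$ for all $x$, hence $F^\Phi_\rho\subseteq G_\rho$ for the $\rho$ witnessing RUES of $G$, and every solution of~\eqref{eq:modeIndSys} (resp.\ of its $\rho$-perturbation) is a solution of $G$ (resp.\ $G_\rho$); so~\eqref{eq:modeIndSys} is RUES, proving~(A). For~(B) the argument is identical with the controller observing $\sigma(k)$ before choosing $u(k)$: the dynamic-programming equation becomes $V(x)=\max\{|x|,\ \gamma^{-1}\max_i\inf_u V(A_ix+B_iu)\}$, and in the last step one picks, for each $i\in\M$, a homogeneous selection $\Phi_i$ of $\{u\mid |u|\le L|x|,\ \widehat V(A_ix+B_iu)\le\widehat\gamma'\widehat V(x)\}$, obtaining $F_d^\Phi\subseteq G_d$ with $G_d$ RUES.
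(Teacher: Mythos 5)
Your overall architecture is the right one --- define a value function over policies, show it is positive definite, homogeneous of degree $1$ and satisfies a dynamic-programming decrease, then extract a homogeneous selection and invoke Lemma~\ref{lemma:technicalLemma} --- and your right-to-left direction and final selection step are fine. But you have correctly located the crux (continuity of the value function) and then left it unproven. Saying that you would obtain a continuous homogeneous certificate ``through converse-Lyapunov theory for difference inclusions together with the homogeneity constructions of~\cite{Poly20}'' is not an argument: the converse results of~\cite{GeiGie14,KellTeel} that you cite apply to \emph{autonomous} inclusions and, in the form recalled in Lemma~\ref{lemma:technicalLemma}, do not guarantee continuity of the Lyapunov function without robustness of the stability --- which is exactly the property being established. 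Applying them to the closed loop $F^{\Phi_0}$ is hopeless because $\Phi_0$ is an arbitrary (possibly wildly discontinuous) feedback, and there is no off-the-shelf converse \emph{control}-Lyapunov theorem for the controlled inclusion that delivers continuity here. So the step on which the whole theorem rests is missing.

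The paper closes this gap with a specific and elementary device that your proposal does not contain. First, it enlarges the policy class to controllers $\Psi\in\mathcal C_m$ depending on the \emph{entire past trajectory} (not just the current state), which costs nothing for the upper bound since a static feedback is a special case. Second, it uses the \emph{summed} cost $V(x_0)=\inf_{\Psi}\sup_\sigma\sum_k|\phi(k,\sigma,x_0,\Psi)|$ and proves that $V$ is \emph{sub-additive}: given near-optimal $\Psi_1$ from $z_1$ and $\Psi_2$ from $z_2$, linearity of the dynamics lets one superpose them into a single memory-dependent controller from $z_1+z_2$ whose trajectory is the sum of the two sub-trajectories (this superposition is precisely why memory is needed --- a static state feedback cannot simulate the two sub-trajectories). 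Sub-additivity together with absolute homogeneity of degree $1$ gives convexity, hence continuity, of $V$; coercivity then turns the infimum in the Bellman equation into a minimum and the argmin selection yields the homogeneous feedback, after which Lemma~\ref{lemma:technicalLemma} gives RUES exactly as you describe. Your $\sup_k\gamma^{-k}|\cdot|$ cost could in principle also be made sub-additive by the same superposition trick, but you neither enlarge the policy class to make superposition available nor invoke sub-additivity/convexity at all, so as written the proposal does not prove the theorem.
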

\textcolor{black}{
The proof of Item~\emph{(A)} 
substantially follows from the results  in~\cite{Blanchini94}. We refer to \cite{BlaMiaSav07} for the argument for the mode-dependent case, i.e., Item~\emph{(B)}. } We want to stress that this result allows us to restrict the search for stabilizing feedback maps into the class of homogeneous of degree $1$ functions\footnote{We recall that it is well-established that simply considering \emph{linear} feedback maps is restrictive in our setting, see the examples provided in~\cite{BlaMiaSav07} and~\cite{HuShen24}.}.
Moreover we also underline that Theorem~\ref{thm:HomogenenousCharachterization} also ensures that the closed-loop can be made \emph{robustly} uniformly exponentially stable via homogeneous feedback controls. Such robustness will play a crucial role in the developments of the main results in Section~\ref{sec:Main}.
\subsection{Memory-based stabilizability notions}\label{subsec:MemoryStabNotion}
In this section we introduce additional notions of stabilizability for~\eqref{eq:SwitchedSystemInput} that involve the notion of \emph{memory}. For a general introduction regarding discrete-time systems depending on a notion of memory inspired by language-theory objects, we refer to~\cite[Section 3]{DelRosJung24}.
Given $M\in\N_+$ let us consider $f:\N\times\M^\star \times \R^n \to \R^n$, and the \emph{system with memory} defined by
\begin{equation}\label{Eq:SystemWithMemory}
x(k+1)=f(k,\sigma_{[0,k]}, x(k)),
\end{equation}
where $\sigma\in \M^\omega$ is a (switching) signal. We denote by $\phi(\cdot\,,\sigma,x_0):\N\to \R^n$ the (unique) solution to~\eqref{Eq:SystemWithMemory} with respect to the signal $\sigma$ and the initial condition $x(0)=x_0$.
\begin{defn}\label{defn:UniformwithMememory}
Given $M\in\N_+$ and $f:\N\times \M^\star \times \R^n \to \R^n$, system~\eqref{Eq:SystemWithMemory} is said to be \emph{uniformly exponentially stable} (UES) if there exist $C\geq 1$ and $\gamma \in [0,1)$ such that
\[
|\phi(k,\sigma,x_0)|\leq C\gamma^k|x_0|\;\;\;\forall x_0\in \R^n,\;\forall \,\sigma\in  \M^\omega, \;\forall k\in \N.
\]
\end{defn}
Note that the term ``\emph{uniformly}'' in Definition~\ref{defn:UniformwithMememory} is referring to both the initial condition $x_0\in \R^n$ and the signal $\sigma\in \M^\omega$: the constants $M$ and $\gamma$ do not depend on either of these parameters. On the other hand, the initial time is always considered to be $k=0$, uniformity in this setting does \emph{not} concern the time variable.

\begin{defn}[Stabilizability with memory]\label{defn:STABwithMemory}
Given $M\in \N_+$ and a set $\cF=\{(A_i,B_i)\in \R^{n\times n}\times \R^{n\times m}\;\vert\;i\in \M\}$, system~\eqref{eq:SwitchedSystemInput} is:
\begin{itemize}
\item[(A):] \emph{Stabilizable via current-mode-independent linear feedbacks with memory} (SIL$_{\text m}$) if there exists $\cK:\N\times \M^\star\to \R^{m\times n}$ such that the system
\[
x(k+1)=A_{\sigma(k)}x(k)+B_{\sigma(k)}\cK(k, [\sigma]_{[0,k-1]})x(k),
\]
is UES, in the sense of Definition~\ref{defn:UniformwithMememory}.

\item[(B):] \emph{Stabilizable via current-mode-dependent linear feedbacks with memory} (SDL$_{\text m}$) if there exists $\cK:\N\times \M^\star\to \R^{m\times n}$ such that the system
\[
x(k+1)=A_{\sigma(k)}x(k)+B_{\sigma(k)}\cK(k, [\sigma]_{[0,k]})x(k),
\]
is UES,  in the sense of Definition~\ref{defn:UniformwithMememory}.

\end{itemize}
In both cases, if there exist $T\in \N$ and a map $\wt \cK:\N\times \M^T \to \R^{m\times n}$ such that  
\begin{equation}\label{eq:FiniteMemory}
\cK(k,[\sigma]_{[0,k]})=\wt \cK(k,[\sigma]_{[k-T,k]}),\;\;\forall\,\sigma\in \M^\omega, \forall\,k\geq T
\end{equation}
the system is said to be \emph{stabilizable (cur.-mod.-ind. or cur.-mod.-dep., respectively) via linear feedbacks with $T$-memory} ($T$-SIL$_{\text m}$ and $T$-SDL$_{\text m}$, respectively). Moreover, if the controllers do not explicitly depend on the time-variable $k\in \N$, we say that they are \emph{time-independent}.
\end{defn}
\section{Main characterization results}\label{sec:Main}
In this section we provide a characterization of the IFS and DFS notions for~\eqref{eq:SwitchedSystemInput} introduced in Definition~\ref{defn:StabNot} in terms of graph-based Lyapunov conditions.

\subsection{Graph-based Lyapunov functions}

A rich literature exists on graph-based Lyapunov functions for switched systems. Specifically, such structures have been widely studied for the stability of autonomous switched systems (i.e. of the form $x(k+1) = A_{\sigma(k)}x(k)$) under both arbitrary and constrained switching signals $\sigma$. The basic idea is to encode Lyapunov inequalities through labeled graphs, which ensure sufficient conditions for the system to be asymptotically stable. In~\cite{DelRosAlv24}, these results were firstly adapted for \emph{stabilization} purposes.
Since similar formalism is used in what follows, we  review the necessary graph-theoretic tools and we further refer to~\cite{AhmJun:14, PEDJ:16, PhiAthAng,DelRosAlv24} for the interested reader.

Let us then introduce a few basic concepts concerning graphs. A (labeled and directed) \emph{graph} $\cG=(S,E)$ on an alphabet $\M$ is defined by a finite set of \emph{nodes} $S$ and a subset of labeled edges $E\subseteq S\times S\times \M$.\\ 
A graph $\cG=(S,E)$ on the alphabet $\langle M \rangle$ is \emph{complete} if for all $a \in S$, for all $i \in \langle M \rangle$, there exists at least one node $b \in S$ such that the edge $e=(a,b,i) \in E$.
This definition of completeness is borrowed from automata theory, and requires that from any node there is at least one outgoing edge labeled by $i$, for every symbol $i\in \M$. It is thus different from the classical notion of completeness of (unlabeled) graphs, requiring the existence of all edges.\\
Moreover, a graph $\cG=(S,E)$ on the alphabet $\M$ is said to be \emph{deterministic} if, for all $a\in S$ and $i\in \M$ there exists at most one $b\in \M$ such that $e=(a,b,i)\in E$.

 The following two propositions from~\cite{DelRosAlv24} characterize sufficient conditions in terms of complete (and deterministic) graphs for mode-independent (and dependent) stabilization using PWL feedbacks. Those results will be needed later and are recalled for the sake of completeness. 

\begin{prop}[\cite{DelRosAlv24},  Proposition~10]\label{Prop:RobustMachin}
Consider $M\in \N_+$ and a set $\cF=\{(A_i,B_i)\in \R^{n\times n}\times \R^{n\times m}\;\vert\;i\in \M\}$, and a \emph{complete} graph $\cG=(S,E)$ on $\M$. Suppose there exist $\{P_s\}_{s\in S}\subseteq \bS_+^{n\times n}$, $\{K_s\}_{s\in S}\subseteq\R^{m\times n}$ such that
\begin{equation*}
(A_i+B_iK_a)^\top P_b (A_i+B_iK_a)-P_a\prec 0,\;\;\forall e=(a,b,i)\in E,
\end{equation*}
then the piecewise linear map
\begin{equation}\label{eq:robustcontrol}
    \Phi(x):=K_{\kappa(x)}x,~x\in \R^n
\end{equation}
exponentially stabilizes system~\eqref{eq:SwitchedSystemInput} (in the mode-independent sense of Item~\emph{1.} of Definition~\ref{defn:StabNot}), where $\kappa:\R^n\to S$ is any homogeneous of degree $0$ function satisfying\footnote{If  $S$ is ordered, we set $\kappa(x)=\min_S\{\argmin_{s\in S} \{x^\top P_sx\}\}$, where $\min_S(\xbar{S})$ is the minimal element of $\xbar{S}\subseteq S$, w.r.t. the order of~$S$.}
\begin{equation}\label{eq:argminDefinition}
\kappa(x)\in\argmin_{s\in S} \{x^\top P_sx\},\;\;\;\forall x\in \R^n.
\end{equation}
\end{prop}

\begin{prop}[\cite{DelRosAlv24}, Proposition~16]\label{Prop:COmpleteModeDependent}
Consider $M\in \N_+$, a set $\cF=\{(A_i,B_i)\in \R^{n\times n}\times \R^{n\times m}\;\vert\;i\in \M\}$ and a \emph{complete and deterministic} graph $\cG=(S,E)$ on $\M$. Suppose there exist $\{P_s\}_{s\in S}\subseteq \bS_+^{n\times n}$, $\{K_{s,j}\}_{(s,j)\in S\times \M}\subseteq \R^{m\times n}$ such that
\begin{equation*}
(A_i+B_iK_{a,i})^\top P_b (A_i+B_iK_{a,i})-P_a\prec 0,\;\;\forall e=(a,b,i)\in E,
\end{equation*}
then, the piecewise linear maps
\begin{equation}\label{eq:modedepcontrol}
\Phi_i(x):=K_{\kappa(x),i}\,x,~\text{for}~i\in \M
\end{equation}
 exponentially stabilize system~\eqref{eq:SwitchedSystemInput} (in the mode-dependent sense of Item~\emph{2.} of Definition~\ref{defn:StabNot}) where $\kappa:\R^n\to S$ is any homogeneous of degree $0$ function satisfying~\eqref{eq:argminDefinition}.
\end{prop}

The proofs of Propositions~\ref{Prop:RobustMachin} and~\ref{Prop:COmpleteModeDependent} can be found in~\cite{DelRosAlv24}. We note here that robustness  can be proven using Lemma~\ref{lemma:technicalLemma}, since it can be proved that the function $V:\R^n\to \R$ defined by
$
V(x):={\tiny \min_{s\in S}\left \{\sqrt{x^\top P_sx}\right\}}$,
is a continuous exponential Lyapunov function for $F^\Phi$ (respectively, $F^\Phi_d$), in the sense of Definition~\ref{defn:LyapunovExponential}. 


\subsection{Feedback-tree graphs}

In order to provide our main result, we define a hierarchical family of graphs that will be used for encoding the numerical conditions. A graph in this class will be called a \emph{feedback-tree graph} (FTG) for convenience. 

To provide the formal definition, let us introduce the following notation: given $M \in \N_+$ and $T \in \N$, we consider the set $ \mathcal{W}(M,T) :=  \bigcup_{k=0}^{T} \M^{k}$,
 i.e.. the set of strings of length less than $T$ over $\M$, with the convention that $\M^0=\{ \bf \epsilon\}$ where ``${\bf \epsilon}$'' denotes the empty string.

\begin{figure}[!t]

\centering
\includegraphics[scale=0.55]{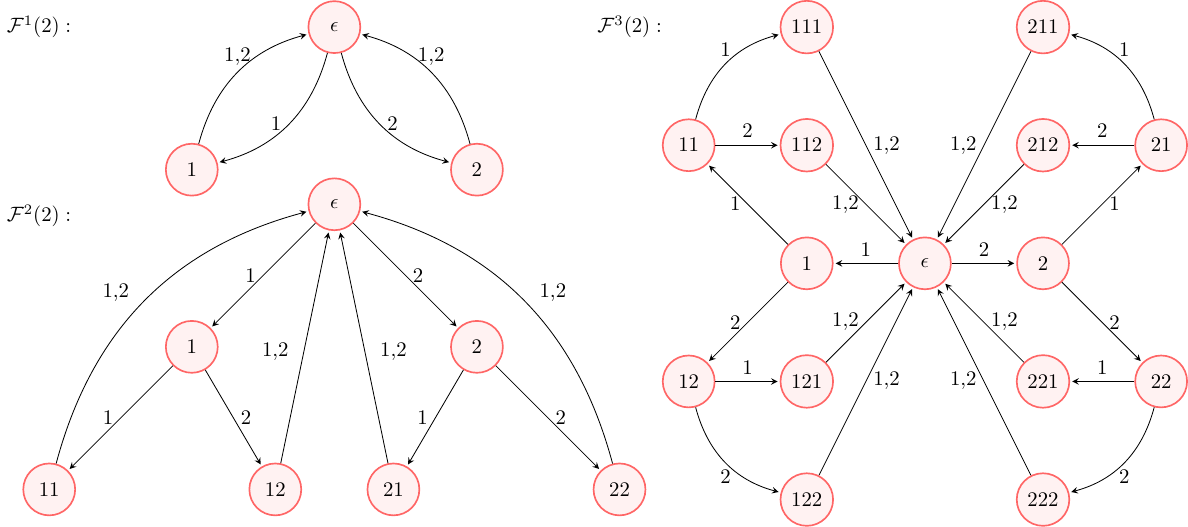}
\caption{The graphs $\mathcal{F}^1(2)$, $\mathcal{F}^2(2)$, and $\mathcal{F}^3(2)$, i.e., the feedback-tree graphs of order $1$, $2$, and $3$ on $\M=\{1,2\}$. The edges labeled by multiple labels can be considered as a graphical shortcut for multiple edges, each one with a single label.}
\label{fig:GraphF}
\end{figure}

\begin{defn}[Feedback-tree graphs]\label{defn:FTG}
 The \emph{feedback-tree graph (FTG) of order $T$ (on $\M$)} is denoted by $\mathcal{F}^{T}(M) = \left(S_{M,T},E_{M,T}\right)$, with $S_{M,T}= \mathcal{W}(M,T)$ and $E_{M,T}$ defined by
 \begin{itemize}
     \item $(\wi,(\wi,i),i) \in E_{M,T}$ for every $i \in \M$ and every $\wi\in \mathcal{W}(M,T-1)= \mathcal{W}(M,T) \setminus \M^T$
     \item  $(\wi,\eS,i) \in E_{M,T}$ for every $i \in \M$ and any $\wi \in \M^T$.
 \end{itemize}
\end{defn}

One can observe that $\mathcal{F}^{T}(M)$ is a complete and deterministic graph for any $M \in \N_+$ and any $T \in \N$, see Fig.~\ref{fig:GraphF}. 
Even if the graph $\cF^T(M)$ is not a tree in a graph-theory sense, the chosen nomenclature is suggested by the following intuition: from the ``root'', i.e., the node labeled by $\eS$, we generate branches in which the visited nodes are labeled by symbols from the alphabet $\M$, up to length $T$. We thus reach~$M^T$ ``leaves'' labeled by words of length $T$. We then connect all these ``leaves'' with the starting ``root'', with edges labeled by any possible symbol in $\M$.

\subsection{Feedback stabilization: equivalent LMIs conditions}\label{sec:PWL}
In this subsection, we  use  the feedback-tree graph structure to provide a characterization result for  robust feedback stabilizability, both in the mode-independent and mode-dependent case.
We first state our sufficient conditions  as a corollary of the Propositions~\ref{Prop:RobustMachin} and~\ref{Prop:COmpleteModeDependent} in the case of the feedback-tree graphs.

\begin{cor}[Sufficiency of FTG-based conditions]\label{cor:FTGModeDep} 
Consider $M\in \N_+$, a set $\cF=\{(A_i,B_i)\in \R^{n\times n}\times \R^{n\times m}\;\vert\;i\in \M\}$, any $T\in \N$, and the  graph $\mathcal{F}^{T}(M) = \left(S_{M,T},E_{M,T}\right)$.

\begin{enumerate}[leftmargin=0.3cm]
\item[$\bullet$]\emph{(Mod-Ind):}
Suppose there exist $\{\xbar{P}_{\wi} \}_{\wi\in S_{M,T}}\subseteq \bS_+^{n\times n}$, $\{\xbar{K}_{\wi}\,\}_{\wi \in S_{M,T}}\subseteq \R^{m\times n}$ such that
\begin{subequations}\label{eq:LMIConditionModeINDFTG}
\begin{equation}\label{eq:LMIConditionModeINDFTG1}
\begin{split}
    \begin{bmatrix}
\xbar{P}_{\wi} & \xbar{P}_{\wi}A_i^{\top}+\xbar{K}_{\wi}^{\top}B_i^{\top} \\ \\ \star & \xbar{P}_{(\wi,i)} \
\end{bmatrix} & \succ 0,\\ \forall\, \wi\in \mathcal{W}(M,T-1) \subseteq S_{M,T},\forall \,i &\in \M,
\end{split}
\end{equation}
\begin{equation}\label{eq:LMIConditionModeINDFTG2}
\begin{split}
    \begin{bmatrix}
\xbar{P}_{\wi} & \xbar{P}_{\wi}A_i^{\top}+\xbar{K}_{\wi}^{\top}B_i^{\top} \\ \\ \star & \xbar{P}_{\eS} \
\end{bmatrix} & \succ 0, \\ \forall\, \wi\in \M^{T} \subseteq S_{M,T},\forall \,i  \in & \M.
\end{split}
\end{equation}
\end{subequations}
Consider the piecewise linear map defined by $\Phi(x):={K}_{\kappa(x)}x$, $x\in \R^n$,  with $K_{\wi}= \xbar{K}_{\wi} \xbar{P}_{\wi}$ and $\kappa:\R^n\to S_{M,T}$ any homogeneous of degree $0$ function satisfying
\begin{equation}\label{eq:Argmin1}
\kappa(x)\in\argmin_{\wi \in S_{M,T}} \{x^\top {P}_{\wi}x\}
\end{equation}
with ${P}_{\wi} = \xbar{P}^{-1}_{\wi}$.
Then, the map $\Phi:\R^n\to \R^m$ exponentially stabilize{\color{black}s} system~\eqref{eq:SwitchedSystemInput}, in the mode-independent sense of Item~\emph{1.} of Definition~\ref{defn:StabNot}.

\item[$\bullet$]\emph{(Mod-Dep)}:
Suppose there exist $\{\xbar{P}_{\wi} \}_{\wi\in S_{M,T}}\subseteq \bS_+^{n\times n}$, $\{\xbar{K}_{\wi,i}\,\}_{\wi \in S_{M,T}, i \in \M}\subseteq \R^{m\times n}$ such that
\begin{subequations}\label{eq:LMIConditionModeDependentFTG}
\begin{equation}\label{eq:LMIConditionModeDependentFTG1}
\begin{split}
    \begin{bmatrix}
\xbar{P}_{\wi} & \xbar{P}_{\wi}A_i^{\top}+\xbar{K}_{\wi,i}^{\top}B_i^{\top} \\ \\ \star & \xbar{P}_{(\wi,i)} \
\end{bmatrix} & \succ 0,\\ 
\forall \wi\in \mathcal{W}(M,T-1) \subseteq S_{M,T},\forall \,i & \in \M,
\end{split}
\end{equation}
\begin{equation}\label{eq:LMIConditionModeDependentFTG2}
\begin{split}
    \begin{bmatrix}
\xbar{P}_{\wi} & \xbar{P}_{\wi}A_i^{\top}+\xbar{K}_{\wi,i}^{\top}B_i^{\top} \\ \\ \star & \xbar{P}_{\eS} \
\end{bmatrix} & \succ 0,\\ 
\forall \wi\in \M^{T} \subseteq S_{M,T},\forall \,i  \in & \M.
\end{split}
\end{equation}
\end{subequations}
For any $i\in \M$, consider the piecewise linear map defined by $\Phi_i(x):={K}_{\kappa(x),i}\,x$, $x\in \R^n$, with $K_{\wi,i}= \xbar{K}_{\wi,i} \xbar{P}_{\wi}$ and $\kappa:\R^n\to S_{M,T}$ any homogeneous of degree $0$ function satisfying~\eqref{eq:Argmin1} with ${P}_{\wi} = \xbar{P}^{-1}_{\wi}$.
Then, the maps $\Phi_1,\dots \Phi_M:\R^n\to \R^m$ exponentially stabilize system~\eqref{eq:SwitchedSystemInput}, in the mode-dependent sense of Item~\emph{2.} of Definition~\ref{defn:StabNot}.
\end{enumerate}
\end{cor}
\begin{proof}

We explicitly develop the \emph{mode-dependent case} only. From standard LMI manipulation techniques, the existence of matrices $\{\xbar{P}_{\wi} \}_{\wi\in S_{M,T}}\subseteq \bS_+^{n\times n}$ and $\{\xbar{K}_{\wi,i}\,\}_{\wi \in S_{M,T}, i \in \M}\subseteq \R^{m\times n}$ such that~\eqref{eq:LMIConditionModeDependentFTG1}-\eqref{eq:LMIConditionModeDependentFTG2} are satisfied is equivalent to the existence of matrices $\{{P}_{\wi} \}_{\wi\in S_{M,T}}\subseteq \bS_+^{n\times n}$, $\{{K}_{\wi,i}\,\}_{\wi \in S_{M,T}, i \in \M}\subseteq\R^{m\times n}$ such that the inequalities
\begin{equation}\label{eq:aux1}
\begin{split}
    (A_i+B_iK_{\wi,i})^\top P_{(\wi,i)} (A_i+B_iK_{\wi,i})&-P_{\wi}\prec 0, \\
    \forall \wi\in \mathcal{W}(M,T-1) \subseteq S_{M,T},\forall \,i & \in \M,
\end{split}
\end{equation}
\begin{equation}\label{eq:aux2}
  \begin{split}
        (A_i+B_iK_{\wi,i})^\top P_{\eS} (A_i+B_iK_{\wi,i})& -P_{\wi} \prec 0,\\
        \forall \wi\in \M^{T} \subseteq S_{M,T},\forall \,i & \in \M,
  \end{split}
\end{equation}
hold. The proof then follows by noting that~\eqref{eq:aux1}-\eqref{eq:aux2} can be rewritten as $(A_i+B_iK_{a,i})^\top P_b (A_i+B_iK_{a,i})-P_a\prec 0$ for all $e=(a,b,i)\in E_{M,T}$,
where $E_{M,T}$ is the set of edges of the graph $\mathcal{F}^T(M)$ introduced in Definition~\ref{defn:FTG}. Since all FTGs are by definition~\emph{complete and deterministic} graphs, Proposition~\ref{Prop:COmpleteModeDependent} applies and the proof is concluded. \end{proof}

Corollary~\ref{cor:FTGModeDep} introduces a new hierarchy of LMI conditions for the computation of PWL stabilizing gains for~\eqref{eq:SwitchedSystemInput}. Another hierarchy of LMI conditions has been recently introduced in the literature (see~\cite[Corollary 18]{DelRosAlv24}). \textcolor{black}{The important and novel feature of the FTG-based conditions in Corollary~\ref{cor:FTGModeDep} is that they are \emph{asymptotically correct}, in the sense that they are also~\emph{necessary} for some large enough graph order $T \in \N$. This is formally stated and proved in the sequence. }
\begin{thm}[Characterizations via feedback-trees]\label{thm:necessityFTGmodedependent}
Consider $M\in \N_+$ and a set $\cF=\{(A_i,B_i)\in \R^{n\times n}\times \R^{n\times m}\;\vert\;i\in \M\}$. The following equivalences hold.
\begin{enumerate}[leftmargin=0.3cm]
\item[$\bullet$]\emph{(Mod-Ind):} System~\eqref{eq:SwitchedSystemInput} is IFS (Item \emph{1.} of Definition~\ref{defn:StabNot}) if and only if there exists a $T\in \N$ (large enough) for which the conditions~\eqref{eq:LMIConditionModeINDFTG1}~\eqref{eq:LMIConditionModeINDFTG2} are satisfied.
\item[$\bullet$]\emph{(Mod-Dep):}
System~\eqref{eq:SwitchedSystemInput} is DFS (Item \emph{2.} of Definition~\ref{defn:StabNot}) if and only if there exists a $T\in \N$ (large enough) for which the conditions~\eqref{eq:LMIConditionModeDependentFTG1}~\eqref{eq:LMIConditionModeDependentFTG2} are satisfied.
\end{enumerate}
\end{thm}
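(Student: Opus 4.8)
The plan is to prove both directions; the sufficiency (``if'') is already established by Corollary~\ref{cor:FTGModeDep}, so the heart of the matter is the necessity (``only if''). I will focus on the mode-dependent case, the mode-independent one being entirely specular. Suppose system~\eqref{eq:SwitchedSystemInput} is DFS. By Theorem~\ref{thm:HomogenenousCharachterization}(B), there exist homogeneous-of-degree-$1$ feedbacks $\Phi_1,\dots,\Phi_M$ such that the closed loop $F_d^\Phi$ is RUES. By the definition of RUES there is a continuous $\rho$ with $\rho(x)>0$ for $x\neq 0$ such that the $\rho$-perturbation $(F_d^\Phi)_\rho$ is UES; by Lemma~\ref{lemma:technicalLemma}(1) this inflated inclusion admits an exponential Lyapunov function, and because the inflated dynamics have good topological properties, the converse theorems of~\cite{GeiGie14,KellTeel} furnish a Lyapunov function $V$ for $F_d^\Phi$ that is moreover \emph{continuous}; invoking homogeneity (via~\cite{Poly20} and the homogeneity of $F_d^\Phi$) one may take $V$ to be absolutely homogeneous of degree~$1$. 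This $V$ satisfies $a_1|x|\le V(x)\le a_2|x|$ and a strict decrease $V(A_ix+B_i\Phi_i(x))\le\gamma V(x)$ for all $i$ and all $x$, with $\gamma<1$.

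The next step is to pass from this continuous homogeneous $V$ and the continuous feedbacks $\Phi_i$ to a \emph{finite} family of quadratic forms and \emph{linear} gains satisfying the FTG-LMIs for some $T$. The idea is a compactness/approximation argument on the unit sphere. First, iterate the decrease condition along length-$T$ words: for any word $\wi=(i_0,\dots,i_{T-1})$ the composition of the $T$ closed-loop maps contracts $V$ by $\gamma^T$. For $T$ large enough, $C\gamma^T<1$ where $C=a_2/a_1$, so the map $x\mapsto$ (state after $T$ steps along $\wi$) is, in the $|\cdot|$-norm, eventually contractive uniformly over words. Then, to each node $\wi\in\mathcal{W}(M,T)$ one wants to attach a positive definite $P_{\wi}$ and, for each $i$, a linear gain $K_{\wi,i}$ so that the edge inequality~\eqref{eq:aux1}--\eqref{eq:aux2} holds. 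The natural candidate for $P_{\wi}$ is a quadratic form that locally dominates $V^2$ on a neighbourhood (in the sphere) of the ``reachable cone'' after reading $\wi$, and the natural candidate for $K_{\wi,i}$ is a linear map agreeing with (or linearly interpolating) the homogeneous feedback $\Phi_i$ on that cone. Because $V$ is continuous and the sphere is compact, $V^2$ is uniformly approximable from above by finitely many quadratics, each valid on a small spherical cap; one chooses $T$ large enough that, for each word, the image cap after $T$ steps is small enough to be captured by a single such quadratic with strict contraction margin $\gamma^T<1$, and one indexes these quadratics by the FTG nodes so that the ``wrap-around'' edges from $\M^T$ back to $\eS$ are exactly the ones using the global contraction estimate.

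A cleaner route to the same conclusion, which I would actually write out, is to go through the \emph{linear memory-dependent controller} sub-result advertised in the introduction (the SDL$_{\text m}$ / $T$-SDL$_{\text m}$ notions of Definition~\ref{defn:STABwithMemory}). Namely: RUES of $F_d^\Phi$ together with homogeneity should first be used to show that~\eqref{eq:SwitchedSystemInput} is stabilizable by a \emph{time-independent, finite $T$-memory, mode-dependent linear} feedback $\wt\cK(\,[\sigma]_{[k-T,k]})$ — intuitively, on each cone $\{x:\kappa(x)=\wi\}$ where the min in $W=\min_{\wi}\sqrt{x^\top P_{\wi}x}$ is attained one uses the corresponding linear gain, and the ``state'' of the min-switching logic can be recovered from the last $T$ symbols of $\sigma$ because the closed loop is eventually contractive. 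Once a $T$-memory linear stabilizer $\{\wt\cK(\wi,i)\}$ with associated (eventually contractive along words) dynamics is in hand, one builds, node by node along the tree, quadratic Lyapunov certificates $P_{\wi}$ by summing the closed-loop transition costs over all admissible continuations of length $\le T$ — a finite sum because the tree has depth $T$ and wraps to the root — and the resulting $P_{\wi}$, $K_{\wi,i}=\wt\cK(\wi,i)$ satisfy precisely the edge inequalities~\eqref{eq:aux1}--\eqref{eq:aux2}, hence~\eqref{eq:LMIConditionModeDependentFTG1}--\eqref{eq:LMIConditionModeDependentFTG2} after Schur complements. The role of the FTG structure is exactly to bookkeep ``the last $T$ switching symbols'' as nodes and the transition ``append $i$, or, if already at depth $T$, drop everything and restart'' as edges; Corollary~\ref{cor:T} (feasibility at order $N$ implies feasibility at order $2N+1$) shows the hierarchy is monotone enough that ``some $T$'' is the right quantifier.

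The main obstacle I anticipate is the passage from the \emph{continuous homogeneous nonlinear} feedback $\Phi_i$ (whose existence is all that Theorem~\ref{thm:HomogenenousCharachterization} gives) to \emph{finitely many linear} gains with a \emph{uniform strict} contraction margin — i.e. making the compactness argument on the sphere quantitatively tight so that a single finite $T$ works simultaneously for all $M^T$ words. Two subtleties hide here: (i) the Lyapunov function $V$ from the converse theorem is only continuous, not piecewise quadratic, so one must genuinely approximate it by a min of quadratics while preserving the strict decrease — this is where the inflation $\rho$ and the slack $\gamma^T<1$ are spent; and (ii) the linear gain $K_{\wi,i}$ must make $A_i+B_iK_{\wi,i}$ contract $P_{(\wi,i)}$ relative to $P_{\wi}$ \emph{on the whole space}, not just on the cone associated to $\wi$, so one needs the quadratic $P_{\wi}$ to be small enough (a ``tight'' ellipsoid hugging $V^2$ on its cone) that the LMI, which is a global matrix inequality, still holds — again forced by taking $T$ large. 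I expect the detailed estimates for (i)--(ii) to be the technical core of the argument, paralleling but refining the De-Bruijn-graph construction of~\cite{DelRosAlv24} and using the homogeneity machinery of~\cite{Poly20} to reduce everything to the compact sphere.
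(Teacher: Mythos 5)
Your proposal correctly reduces the theorem to the necessity direction, correctly starts from Theorem~\ref{thm:HomogenenousCharachterization} (homogeneous feedbacks with RUES closed loop), and correctly identifies that the target is the finite family of inequalities \eqref{eq:aux1}--\eqref{eq:aux2}. However, the core of the argument is missing, and the two routes you sketch for filling it do not work as described. Your first route (converse continuous homogeneous Lyapunov function $V$, then approximation of $V^2$ by a min of quadratics on spherical caps, with $K_{\wi,i}$ a linearization of $\Phi_i$ on each cap) founders exactly on the obstacle you yourself flag as item (ii): the edge inequality $(A_i+B_iK_{\wi,i})^\top P_{(\wi,i)}(A_i+B_iK_{\wi,i})\prec P_{\wi}$ is a \emph{global} matrix inequality, while a linear gain agreeing with $\Phi_i$ only on a small cap gives no control whatsoever off that cap, and a quadratic ``hugging'' $V^2$ tightly on a cap is necessarily very far from $V^2$ elsewhere. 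Taking $T$ larger refines the word partition but does not shrink the domain on which each single LMI must hold, so the slack $\gamma^T<1$ cannot be ``spent'' to absorb this error; you offer no mechanism that does. Your second route is both circular and too strong: recovering the min-switching node from the last $T$ symbols presupposes the finite family $\{P_{\wi}\}$ you are trying to construct, and a \emph{time-independent} finite-memory linear stabilizer is precisely the De Bruijn--type notion that the paper explicitly leaves as possibly \emph{strictly stronger} than DFS (Subsection~\ref{subsec:DeBrunjii}); deriving it from DFS would settle an open question the authors deliberately avoid.

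The paper's actual mechanism is quite different and sidesteps the global-versus-local issue entirely: no converse Lyapunov function is used, only the exponential decay estimate \eqref{eq:expbound} for the inflated inclusion. One propagates the canonical basis $\{e_1,\dots,e_n\}$ through the inflated closed loop along each word $\wi$, collecting the images as columns of matrices $X_{\wi}$; the gain $K_{\wi,i}$ is defined by \emph{interpolating the nonlinear feedback $\Phi_i$ on the $n$ columns of $X_{\wi}$} (formula \eqref{eq:matrixK}), and the inflation radius $\rho$ is spent to add a perturbation $D_{\wi,i}$, orthogonal in the sense of \eqref{eq:matrixD}, that keeps $X_{(\wi,i)}$ non-singular while remaining an admissible trajectory of $F^\Phi_\rho$. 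Setting $Q_{\wi}=X_{\wi}X_{\wi}^\top$, the identity $Q_{(\wi,i)}=(A_i+B_iK_{\wi,i})Q_{\wi}(A_i+B_iK_{\wi,i})^\top+D_{\wi,i}D_{\wi,i}^\top$ makes the edge inequality hold \emph{globally by construction} (no cone localization is ever needed), and the decay estimate at time $T+1$ gives $Q_{\wi}\preceq\beta^2 I$ on the leaves, which after the rescaling $P_{\wi}=\beta_0^{\ell(\wi)}Q_{\wi}^{-1}$ turns all the non-strict inequalities into the required strict ones, including the wrap-around edges to $P_{\eS}=I$. This basis-propagation/outer-product construction is the key idea absent from your proposal.
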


\begin{proof}
The ``if parts'' of the proof, i.e., the sufficiency of conditions~\eqref{eq:LMIConditionModeINDFTG1}~\eqref{eq:LMIConditionModeINDFTG2} (conditions~\eqref{eq:LMIConditionModeDependentFTG1}~\eqref{eq:LMIConditionModeDependentFTG2}, respectively) for stabilizability is provided in Corollary~\ref{cor:FTGModeDep}.
It remains to prove the ``only if'' part. To avoid redundancy, we explicitly report here the proof only for the \emph{mode-dependent} case; the mode-independent proof directly follows, \emph{mutatis mutandis}. 
Let us suppose that system~\eqref{eq:SwitchedSystemInput} is DFS, and (via Theorem~\ref{thm:HomogenenousCharachterization}) there exist maps $\Phi_1,\dots \Phi_M:\R^n \to \R^m$ such that the difference inclusion~\eqref{eq:modeDepSys}
 is RUES. By definition of RUES, there exists a continuous and positive definite $\rho:\R^n \to \R_+$ such that the $\rho$-perturbed difference inclusion (recall Definition~\ref{defn:perturbation})
 \begin{equation}\label{eq:InculsionProof}
 \color{black}
x(k+1)\in F^\Phi_{d,\rho}(x)\supseteq F_d^\Phi(x)+\rho(x)\B 
\end{equation}
is UES. This, in turns, implies by definition that there exist
 constants \(\gamma \in [0,1)\) and $C\geq 1$ such that $| \phi(k) | \leq C \gamma^k| x_0 |$
for all $k \in \N$, all $x_0\in \R^n$ and all $\phi\in \cS_{F^\Phi_{d,\rho},\,x_0}$. Let $\beta\in (0,1)$ and
consider \(T\in \N\) sufficiently large such that \(C \gamma^{T+1} \le \frac{\beta}{\sqrt{n}}\), leading to
\vspace{-0.3cm}
\begin{equation}\label{eq:DecreasingProof}
| \phi(k) | \le  \frac{\beta}{\sqrt{n}}| x_0 |, 
\end{equation}
$\forall k \geq T+1$, $\forall \,x_0\in \R^n$, $\forall\;\phi\in \cS_{F^\Phi_{d,\rho},\,x_0}$.\\
Let us now define iteratively several sequences of matrices
$\{{X}_{\wi} \}_{\wi\in \mathcal{W}(M,T+1)}\subseteq \R^{n\times n}$,
$\{{K}_{\wi,i}\,\}_{\wi \in \mathcal{W}(M,T), i \in \M}\subseteq \R^{m\times n}$,
$\{{D}_{\wi,i} \}_{\wi \in \mathcal{W}(M,T), i \in \M}\subseteq \R^{n\times n}$ (with iteration on the length $k\in \{0,\dots, T\}$ of the strings $\wi\in \mathcal{W}(M,T+1)$). 
We denote 
${X}_{\wi} =(x_{\wi}^1,\dots,x_{\wi}^n)$ and
${D}_{\wi,i} =(d_{\wi,i}^1,\dots,d_{\wi,i}^n)$
where $x_{\wi}^j,d_{\wi,i}^j \in \R^n$. 
The construction presented below will ensure that
the matrices ${X}_{\wi}$ are non-singular.

Let us define $X_{\epsilon}=I$ \textcolor{black}{and $C_{\wi,i} := (A_i+B_iK_{\wi,i})$, where for $\wi\in \mathcal{W}(M,T)$ and $i\in \M$, $K_{\wi,i}$ is given by} 
\begin{equation}
    \label{eq:matrixK}
    K_{\wi,i}= \left(\Phi_{i}(x_{\wi}^1),\dots,\Phi_{i}(x_{\wi}^n)\right) X_{\wi}^{-1}.
\end{equation}
Then, it is always possible to choose a matrix ${D}_{\wi,i}$ such that the following two conditions holds:
\begin{enumerate}[label=\alph*),leftmargin=*]
    \item $\mspan(C_{\wi,i}X_{\wi} + D_{\wi,i}) =\R^n$ and 
\begin{equation}
\label{eq:matrixD}
    \mspan\left(D_{\wi,i}^\top \right)=\mspan\left({X}_{\wi}^\top C_{\wi,i}^\top\right)^\perp.
\end{equation}
\item For all $j\in \{1,\dots,n\}$
$|d_{\wi,i}^j|\le \rho(x_{\wi}^j)$.
\end{enumerate}
Then, let 
\begin{equation}
    \label{eq:matrixX}
    X_{(\wi,i)}=C_{\wi,i}X_{\wi} + D_{\wi,i}.
\end{equation}
Condition a) ensures that $X_{(\wi,i)}$ is a non-singular matrix. Moreover, from condition b) and from \eqref{eq:matrixK} we get for all $j\in  \{1,\dots,n\}$
\begin{equation} \label{eq:dynmatrix}
\begin{aligned}
x_{(\wi,i)}^j &= C_{\wi,i}x^j_{\wi} + d^j_{\wi,i} \\
&= A_ix^j_{\wi}+B_i\Phi_{i}(x^j_{\wi})+d^j_{\wi,i}
\in F_{d,\rho}^\Phi(x^j_{\wi}).
\end{aligned} 
\end{equation}
We can now define the matrices $Q_{\wi} = X_{\wi}X_{\wi}^\top$, for all $\wi\in \mathcal{W}(M,T+1)$.
Since $X_{\wi}$ is non-singular, it follows that
$Q_{\wi} \in \bS_+^{n\times n}$.
Then, for all $\wi\in \mathcal{W}(M,T)$, and $i\in \M$
\begin{align}
\nonumber
&Q_{(\wi,i)} = X_{(\wi,i)}X_{(\wi,i)}^\top= \left(C_{\wi,i}X_{\wi} + D_{\wi,i}
\right)
\left(C_{\wi,i}X_{\wi} + D_{\wi,i}
\right)^\top\\
\
& 
\label{eq:decrease}
= C_{\wi,i}X_{\wi}X_{\wi}^\top
C_{\wi,i}^\top+D_{\wi,i}D_{\wi,i}^\top\succeq 
C_{\wi,i}Q_{\wi}
C_{\wi,i}^\top
\end{align}
where the second equality follows from \eqref{eq:matrixX} and the third equality follows from \eqref{eq:matrixD}.
Since the matrices $Q_{\wi}\in \bS_+^{n\times n}$, 
\eqref{eq:decrease} is equivalent, by Schur complement, to
\begin{equation}
\label{eq:decrease2}
C_{\wi,i}^\top Q_{(\wi,i)}^{-1}C_{\wi,i} \preceq Q_{\wi}^{-1}.
\end{equation}
Consider now $\wi \in \M^{T+1}$, 
by~\eqref{eq:DecreasingProof} 
and~\eqref{eq:dynmatrix},
we have that
\begin{equation}\label{eq:sqrtn1}
    |x^j_{\wi}|< 
    \frac{\beta}{\sqrt{n}}\;\;\forall j\in \{1,\dots, n\}.
\end{equation}
Then, for any $z\in \R^n$, it holds that $|X_{\wi} z| \le \sum_{j=1}^n |z_j|\; |x_{\wi}^j| \le \frac{\beta}{\sqrt{n}} \sum_{j=1}^n |z_j| \le \beta |z|$
where the first, second and third inequalities follow from the triangular inequality, 
from \eqref{eq:sqrtn1}, and from Cauchy-Schwarz inequality, respectively.
It then follows that
$
X_{\wi}^\top X_{\wi} \preceq \beta^2 I$,
which is equivalent by Schur complement to 
\begin{equation}
\label{eq:decrease3}
Q_{\wi} = X_{\wi} X_{\wi}^\top \preceq \beta^2 I.
\end{equation}
Let $\beta_0 \in (0,1)$ be given by
$\beta_0 = \beta^{\frac{2}{T+1}}$ and let us define for all $\wi\in \mathcal{W}(M,T)$
${P}_{\wi} = \beta_0^{\ell(\wi)} Q_{\wi}^{-1}$ 
where  $\ell(\wi)$ denote the length of $\wi$.
Let us show that inequalities \eqref{eq:aux1}-\eqref{eq:aux2} hold for this specific choice of matrices.
Let us consider $\wi\in \mathcal{W}(M,T-1)$ and $i\in \M$. Then, from \eqref{eq:decrease2}, we get
\begin{align*}
&C_{\wi,i}^\top P_{(\wi,i)} C_{\wi,i}=\beta_0^{\ell(\wi)+1}C_{\wi,i}^\top Q_{(\wi,i)}^{-1} C_{\wi,i}\\
&\preceq \beta_0^{\ell(\wi)+1} Q_{\wi}^{-1}
= \beta_0 P_{\wi}.
\end{align*}
Hence, \eqref{eq:aux1} holds. 
Consider $\wi\in \M^{T}$ and $i\in \M$;
then
\begin{align*}
 C_{\wi,i}^\top P_{\eS} C_{\wi,i} & = C_{\wi,i}^\top  C_{\wi,i}\\
& \preceq \beta_0^{T+1} C_{\wi,i}^\top  
Q_{(\wi,i)}^{-1}
C_{\wi,i} \preceq \beta_0^{T+1} Q_{\wi}^{-1}
= \beta_0 P_{\wi}
\end{align*}
where the first and second inequalities follow from
\eqref{eq:decrease2} and \eqref{eq:decrease3}, respectively.
Hence, \eqref{eq:aux2} holds. 
Summarizing, we have proved the feasibility (for large enough $T$) of inequalities \eqref{eq:aux1}-\eqref{eq:aux2}, which is also equivalent to the feasibility of the LMIs~\eqref{eq:LMIConditionModeDependentFTG1}-\eqref{eq:LMIConditionModeDependentFTG2}. This concludes the proof.
\end{proof}
\textcolor{black}{
We want to underline the robustness result of Theorem~\ref{thm:HomogenenousCharachterization} is instrumental in the proof of Theorem~\ref{thm:necessityFTGmodedependent}. More precisely, it  enables a constructive proof of the existence of a solution to the LMIs conditions; this is explicitly reflected in the construction given in~\eqref{eq:DecreasingProof},~\eqref{eq:dynmatrix} and~\eqref{eq:sqrtn1}.}

In the following subsection, we will analyze the relationship between Theorem~\ref{thm:necessityFTGmodedependent} and the design of controllers depending on the past values of the switching signal (but not on the past values of the state), to achieve memory-based stabilization as introduced in Subsection~\ref{subsec:MemoryStabNotion}

\subsection{Memory-based linear controllers}\label{sub:MemoryCOntroller}
 In Theorem~\ref{thm:necessityFTGmodedependent} we have shown that the LMI conditions~\eqref{eq:LMIConditionModeINDFTG} (resp.~\eqref{eq:LMIConditionModeDependentFTG}) based on feedback-tree structures, are equivalent, for $T$ large enough, to  feedback stabilization (mod.-ind. and mod.-dep., respectively). The obtained feedback maps were  in general nonlinear (but piecewise linear). In this subsection we show that they also induce the construction of memory-based \emph{linear} feedback.
We first need to introduce some notation.
Given $T\in\N$, we consider the \emph{Euclidean remainder function} $\re_T:\N\to \N$, which, to any natural number $k$ associates its remainder with respect to the division by $T$. More explicitly: $\re_T(k)=0$ if $k=mT$ for some $m\in \N$, $\re_T(k)=1$ if $k=mT+1$ for some $m\in \N$, and so on, so forward. 
Let us now define yet another auxiliary function: for any $T\in\N$ we consider the \emph{cutting function (of length $T$)} $\cut_T:\M^\omega\times \N\to \bigcup_{k=0}^{T-1} \M^{k}$ defined by
\begin{equation}\label{eq:CutDefinition}
\begin{aligned}
\cut_T(\sigma,k)&=[\sigma]_{[k-\re_T(k),\,k-1]}
\end{aligned}
\end{equation}
with the convention that $[\sigma]_{[k,k-1]}=\eS$ for any $\sigma\in \M^\omega$, and any $k\in \N$.
Note that, for any $T\in \N$, $\cut_T$ can be seen as a \emph{causal} function, in the sense that  $\cut_T(\sigma,k)$ depends only on $[\sigma]_{[0,k-1]}$ for all $\sigma\in \M^\omega$ and it is thus independent from the values that $\sigma$ takes after the time $k\in \N$.
We can now state the main result of this subsection.

\begin{prop}\label{prop:MemoryImplication} 
Consider $M\in \N_+$, a set $\cF=\{(A_i,B_i)\in \R^{n\times n}\times \R^{n\times m}\;\vert\;i\in \M\}$, any $T\in \N$, and the  graph $\mathcal{F}^{T}(M) = \left(S_{M,T},E_{M,T}\right)$.
\begin{itemize}[leftmargin=0.3cm]
\item[$\bullet$]\emph{(Mod-Ind):} Let us suppose that the conditions in~\eqref{eq:LMIConditionModeINDFTG} are satisfied, and consider the matrices $K_{\wi}= \xbar{K}_{\wi} \xbar{P}_{\wi}\in \R^{m\times n}$, for any $\wi\in\mathcal{W}(M,T)$.  Then system~\eqref{eq:SwitchedSystemInput} is $T$-SIL$_{\text m}$ (recall Definition~\ref{defn:STABwithMemory}) by considering $\cK:\N\times \M^\star\to \R^{m\times n}$ defined as follows:
\begin{equation}\label{eq:ModeINDepMemoryCOntroller}
\cK(k,\sigma_{[0,k-1]})=K_{\cut_{T+1}( \sigma,k)}, \forall \sigma\in \M^\omega, \forall k\in \N. 
\end{equation}
\item[$\bullet$]\emph{(Mod-Dep):} Let us suppose that the conditions in~\eqref{eq:LMIConditionModeDependentFTG} are satisfied, and consider the matrices $K_{\wi,i}= \xbar{K}_{\wi,i} \xbar{P}_{\wi}\in \R^{m\times n}$, for any $\wi\in\mathcal{W}(M,T)$ and any $i\in \M$.  Then system~\eqref{eq:SwitchedSystemInput} is $T$-SDL$_{\text m}$ (recall Definition~\ref{defn:STABwithMemory}) by considering $\cK_d:\N\times \M^\star\to \R^{m\times n}$ defined as follows:
\begin{equation}\label{eq:ModeDepMemoryCOntroller}
\hspace*{-0.55cm}
\cK_d(k,\sigma_{[0,k]})=K_{(\cut_{T+1}(\sigma, k),\sigma(k))}, \forall \sigma\in \M^\omega, \forall k\in \N. 
\end{equation}
\end{itemize}
\end{prop}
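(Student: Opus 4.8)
The plan is to recognize that, under the memory-based controller, the closed loop is nothing but a \emph{walk} in the feedback-tree graph $\cF^T(M)$, and then to transport along this walk the edge-wise Lyapunov inequalities that the LMIs encode. I would treat the mode-dependent case in detail, the mode-independent one being word-for-word the same with $K_{\wi,i}$ replaced by $K_{\wi}$ throughout. Fix $\sigma\in\M^\omega$ and $x_0\in\R^n$; under the controller~\eqref{eq:ModeDepMemoryCOntroller} the resulting solution obeys $x(k+1)=(A_{\sigma(k)}+B_{\sigma(k)}K_{\wi_k,\sigma(k)})x(k)$, where $\wi_k:=\cut_{T+1}(\sigma,k)$. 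The first step --- and the only genuinely combinatorial one --- is to prove the claim: \emph{for every $k\in\N$, the triple $(\wi_k,\wi_{k+1},\sigma(k))$ is an edge of $\cF^T(M)$.}

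To establish the claim I would argue by cases on $r:=\re_{T+1}(k)\in\{0,\dots,T\}$, using that $\wi_k=[\sigma]_{[k-r,k-1]}\in\M^r$ (so $\wi_k=\eS$ when $r=0$). If $r\le T-1$, then $\re_{T+1}(k+1)=r+1$, hence $\wi_{k+1}=[\sigma]_{[k-r,k]}=(\wi_k,\sigma(k))$; since $\wi_k\in\M^r\subseteq\mathcal{W}(M,T-1)$, the triple $(\wi_k,(\wi_k,\sigma(k)),\sigma(k))$ is an edge of the first type in Definition~\ref{defn:FTG}. If $r=T$, then $\re_{T+1}(k+1)=0$, hence $\wi_{k+1}=\eS$; since $\wi_k\in\M^T$, the triple $(\wi_k,\eS,\sigma(k))$ is an edge of the second type. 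In particular $\wi_0=\eS$. The care needed here is purely in the index bookkeeping --- the wrap-around at multiples of $T+1$ and the initial block $k\le T$ --- and I expect this to be the main obstacle, though a shallow one.

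The remainder is a standard finite-node Lyapunov argument, carried out as follows. By the manipulations already performed in the proof of Corollary~\ref{cor:FTGModeDep} (rewriting~\eqref{eq:LMIConditionModeDependentFTG} as~\eqref{eq:aux1}--\eqref{eq:aux2} and then edge-wise), the LMI solution yields, with $P_{\wi}:=\xbar{P}_{\wi}^{-1}$, the inequalities $(A_i+B_iK_{a,i})^\top P_b(A_i+B_iK_{a,i})\prec P_a$ for every $(a,b,i)\in E_{M,T}$; since $E_{M,T}$ is finite there is $\lambda\in[0,1)$ making all of them hold with $\preceq\lambda P_a$. Setting $V_k:=x(k)^\top P_{\wi_k}x(k)$ and invoking the claim gives $V_{k+1}\le\lambda V_k$, hence $V_k\le\lambda^k V_0=\lambda^k x_0^\top P_{\eS}x_0$; combining with the uniform bounds $c_1|x|^2\le x^\top P_{\wi}x\le c_2|x|^2$ (valid over the finite node set, for suitable $0<c_1\le c_2$) yields $|x(k)|\le C\gamma^k|x_0|$ with $C:=\max\{1,\sqrt{c_2/c_1}\}\ge 1$ and $\gamma:=\sqrt\lambda\in[0,1)$, uniformly in $\sigma\in\M^\omega$ and $x_0\in\R^n$ --- which is exactly Definition~\ref{defn:UniformwithMememory}. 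I would close by noting that $\re_{T+1}(k)\le T$ forces $\cK_d(k,\sigma_{[0,k]})=K_{(\cut_{T+1}(\sigma,k),\sigma(k))}$ to depend only on $k$ and on $[\sigma]_{[k-T,k]}$, so the controller is of $T$-memory type, giving $T$-SDL$_{\text m}$ (and, mutatis mutandis, $T$-SIL$_{\text m}$ in the mode-independent case).
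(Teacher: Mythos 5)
Your proposal is correct and follows essentially the same route as the paper: both identify the closed loop under $\cut_{T+1}$ as a walk through the edges of $\cF^T(M)$ (your explicit case analysis on $\re_{T+1}(k)$ is exactly the bookkeeping the paper performs implicitly in its iteration from $\eS$ through $\M^T$ and back) and then propagate the edge-wise inequalities $(A_i+B_iK_{a,i})^\top P_b(A_i+B_iK_{a,i})\prec P_a$ along that walk. The only difference is in the final packaging: you extract a uniform $\lambda\in[0,1)$ and telescope the per-step decrease of $x(k)^\top P_{\wi_k}x(k)$ directly, whereas the paper only records the decrease of $x^\top P_{\eS}x$ over blocks of length $T+1$ and then invokes the finite-step Lyapunov function result of \cite{GeiGie14}; your version is marginally more self-contained but proves the same thing.
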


\begin{proof}
The proof is straightforward since mostly notational. We sketch the mode-dependent case only; as always, the mode-independent case can be easily obtained, \emph{mutatis mutandis}. \textcolor{black}{From the proof of} Corollary~\ref{cor:FTGModeDep}, conditions~\eqref{eq:LMIConditionModeDependentFTG} imply that~\eqref{eq:aux1}-\eqref{eq:aux2} are satisfied with ${P}_{\wi} = \xbar{P}^{-1}_{\wi}$ for all $\wi\in \cW(M,T)$.
Suppose $T>0$ (otherwise the proof is trivial) and consider any $x_0\in \R^n$ and any $\sigma\in \M^\omega$.
Let us suppose that $\sigma(0)=i\in \M$. Then, by~\eqref{eq:aux1} we have
\[
\begin{aligned}
  &(A_i+B_i\cK_d(0,\sigma_{[0,0]}))^\top P_{i} (A_i+B_i\cK_d(0,\sigma_{[0,0]}))\\&=(A_i\hspace{-0.08cm}+\hspace{-0.08cm}B_iK_{(\cut_{T+1}( \sigma,0),\,\sigma(0))})^\top P_{i} (A_i\hspace{-0.08cm}+\hspace{-0.08cm}B_iK_{(\cut_{T+1}( \sigma,0),\,\sigma(0))})\\
&=(A_i+B_iK_i)^\top P_{i} (A_i+B_iK_i)\prec P_{\eS}.
  \end{aligned}
\]
Iterating, suppose that $\sigma(1)=j\in \M$, by~\eqref{eq:aux1} we have
 \[
\begin{aligned}
  &(A_j+B_j\cK_d(1,\sigma_{[0,1]}))^\top P_{(i,j)} (A_j+B_j\cK_d(1,\sigma_{[0,1]}))=\\
  &(A_j\hspace{-0.08cm}+\hspace{-0.08cm}B_jK_{(\cut_{T+1}( \sigma,1),\,\sigma(1))})^\top P_{(i,j)} (A_j\hspace{-0.08cm}+\hspace{-0.08cm}B_jK_{(\cut_{T+1}( \sigma,1),\,\sigma(1))})\\
&=(A_j+B_jK_{([\sigma]_{[0,0],j)}})^\top P_{(i,j)} (A_j+B_jK_{([\sigma]_{[0,0],j)}})\\& =(A_j+B_jK_{(i,j)})^\top P_{(i,j)} (A_i+B_iK_{(i,j)})\prec P_i.
  \end{aligned}
\]
We can then iterate up to step $T$ (where in the last iteration we make use of inequality~\eqref{eq:aux2}), proving that there exists a $\gamma\in [0,1)$ such that
        $\phi(T,\sigma,x_0)^\top P_{\eS}\phi(T,\sigma,x_0)\leq \gamma x_0^\top P_{\eS}x_0,$ for all $\sigma\in \M^\omega, x_0\in \R^n$. This, by iteration, also implies 
      $\phi(mT,\sigma,x_0)^\top P_{\eS}\phi(mT,\sigma,x_0)\leq \gamma^m x_0^\top P_{\eS}x_0,$ for all  $ \sigma\in \M^\omega$, $ x_0\in \R^n$, and all $m\in \N$.
In other words, the function $V:\R^n\to \R$ defined by $V(x):=\sqrt{x^\top P_{\eS}x}$ is a \emph{global finite-step Lyapunov function}, see the definition in~\cite[Definition 6]{GeiGie14}, absolutely homogeneous of degree 1. Thus, by adapting the proof of~\cite[Theorem 7]{GeiGie14} implies that system~\eqref{eq:SwitchedSystemInput} equipped with the memory-dependent linear controller in~\eqref{eq:ModeDepMemoryCOntroller} is uniformly exponentially stable (in the sense of Definition~\ref{defn:STABwithMemory}). Moreover, since $\cK(k,\sigma_{[0,k]})=K_{(\cut_{T+1}( \sigma,k-1),\,\sigma(k))}$ and the function $\cut_{T+1}$ defined in~\eqref{eq:CutDefinition} only depends, at most, on $[\sigma]_{[k+1-T-1,k]}=[\sigma]_{[k-T,k]}$ we have that the system is stabilizable mode dependently via linear feedbacks with $T$-memory ($T$-SDL$_{\text m}$), concluding the proof.
\end{proof}
\begin{rem}[Automata-based implementation]\label{rem:AutomataImpl}
    Another way to interpret and implement the results of Corollary~\ref{cor:FTGModeDep} is by automata-based theory. In fact, consider an automata constructed with the states and transitions given by the FTG, with initial state $\{\eS\}$. Satisfaction of the conditions in Corollary~\ref{cor:FTGModeDep} implies that the control $\Phi : \R^n \times S_{M,T} \times \M\to \R^m$ given by $ \Phi_{\wi,\sigma}(x) \coloneqq K_{\wi,\sigma}x$, $x \in \R^n$, $\wi \in S_{M,T}$, $\sigma \in \M$ is also a stabilizing controller for~\eqref{eq:SwitchedSystemInput}, with the Lyapunov function $ V_{\wi(k)}(x(k)) ={\tiny \sqrt{x^{\top}(k)P_{\wi(k)} x(k)}}$, where $\wi(k)$ is the state of the automata based defined by the FTG. In this case, the conditions of Corollary~\ref{cor:FTGModeDep} lead to the exponential stability of the \emph{set} $\{0\} \times S_{M,T}$ for the hybrid system with augmented state $\begin{bmatrix}
        x(k)^{\top} & \wi(k) 
    \end{bmatrix}^{\top}$. 
\end{rem}

\subsection{Comparison with existing results}\label{subsec:DeBrunjii}
\textcolor{black}{In this work,} we have provided sufficient and necessary LMIs conditions for feedback exponential stabilizability of switched systems, both in a mode-dependent and mode-independent settings. Moreover, we related the proposed conditions with a notion of memory-based stabilizability.

\textcolor{black}{It is well known that stabilizability of uncertain or switched linear systems under arbitrary switching can be characterized by the existence of polyhedral contractive sets, yielding set-induced Lyapunov functions; see, e.g.,~\cite{Blanchini94}. Moreover, since any compact convex polytope can be approximated arbitrarily well by unions (or convex hulls) of ellipsoids, one can also prove that a stabilizable system always admits a Lyapunov function of the \emph{min-of-quadratics} type  (see~\cite{HuBla10}). However, this observation alone does \emph{not} imply the necessity result established in this paper. The key technical point is that not every min-type (or piecewise quadratic) Lyapunov function can be represented as a \emph{graph-based} (a.k.a.\ path-complete) Lyapunov function with a finite number of nodes. As observed in~\cite{AhmJun:14,PhiAthAng}, the class of Lyapunov functions encoded by path-complete graphs is strictly smaller and more structured than the class of general min/max of quadratic functions. In particular, even if a stabilizable closed-loop system admits a min-of-quadratics Lyapunov function obtained from the approximation of a polyhedral invariant set, there is no \emph{a priori} guarantee that such a Lyapunov function satisfies a finite set of quadratic inequalities that can be encoded by a graph and verified through convex LMIs. Furthermore, unstructured min/max quadratic Lyapunov functions cannot, in general, be exactly characterized via the $S$-procedure, which only provides sufficient (and not necessary) conditions. Therefore, the necessity results in this work cannot be obtained as a direct consequence of existing polyhedral or min-of-quadratics constructions. Instead, it relies on the explicit construction of a hierarchy of graphs whose associated graph-based Lyapunov inequalities become asymptotically tight, together with a proof strategy that  exploits the underlying graph structure.}

\textcolor{black}{Concerning previous graph-based strategies, in the paper by~\cite{DelRosAlv24}, partially generalizing previous results~(\cite{BLIMAN2003325,LeeKha09,EssLee14}), other stabilization schemes were proposed, also based on graph-based Lyapunov functions. Nonetheless, these results were only sufficient and lacked necessity, as discussed in the introduction. The corresponding LMIs were based on a graph structure known as De Bruijn graphs (see~\cite{DeBru46} for the historical reference), which are complete and deterministic graphs, and are recalled below for use in the following discussion.} 

\begin{defn}[De Bruijn graphs]\label{defn:DeBRunjii}
Given $M \in \N_+, T\in \N$ the \emph{De Bruijn graph of order $T$ (on $\M$)} denoted $\cG_{DB}^T=(S_{d,T},E_{d,T})$ is defined by: $S_{d,T}:=\M^{T}$ and, given any node $\wi=(i_0, \dots,i_{T-1})\in S_{d,T}$, we have $(\wi,\wj, h)\in E_{d,T}$ for every $\wj=(j,i_0,\dots, i_{T-2})$, for any $j\in \M$.
\end{defn}

Such class of graphs naturally arises in language theory, since they exploit, in their formal definition, the idea of developing words of an alphabet starting from  ``small'' strings of a given length. In our context, these structures have led to the stabilization results presented in~\cite{DelRosAlv24}. Moreover, as proven and developed in~\cite{LeeDull06,LeeKha09,EssLee14}, such conditions are also related and in fact equivalent (see Theorem~9 in~\cite{LeeKha09}) to a notion of \emph{autonomous/time-independent} finite memory stabilization via linear feedbacks. Indeed, when stabilizability conditions based on De Bruijn graphs are satisfied, there exists a finite number of linear feedback for which it suffices to observe the last $T$ (for a $T$ large enough) values of the switching signal to correctly apply the right linear feedback gain. \textcolor{black}{This might be compared with our Definition~\ref{defn:STABwithMemory}, in which the memory-dependent controllers might depend also on the current instant of time $k$ and/or on the whole ``tail'' of past switched signal values. The linear controllers we provided in Subsection~\ref{sub:MemoryCOntroller} (based on the necessary and sufficient conditions presented in Corollary~\ref{cor:FTGModeDep}) indeed are intrinsically dependent on the current instant time instant $k$, and thus require, in general, more information with respect to controllers based on the De Bruijn structures, which, as said, depend only on a fixed-length tail of past switching values. This might confirm the evidence that conditions formulated using De Bruijn graphs (and hence involving a notion of autonomous memory) may be only sufficient for stabilizability, whereas the conditions based on feedback-tree graphs introduced in this paper have been shown to be both necessary and sufficient. On the other hand, from a numerical perspective, we do not claim any computational superiority for the conditions proposed in this manuscript, whose structure was primarily inspired and guided by the necessity proof discussed above. In the subsequent section, we highlight this computational comparison, by presenting some numerical examples.}



\section{Numerical examples}\label{sec:num}

In this section, we provide some numerical examples in order to explore the controller design conditions in Corollary~\ref{cor:FTGModeDep}. In order to draw comparisons with respect to the recent literature, let us introduce the following notion.
A constant $\gamma \in [0,\infty)$ is an \emph{attainable growth rate} of~\eqref{eq:SwitchedSystemInput} if
\begin{enumerate}[leftmargin=*]
    \item \emph{(mode-ind.)}  there exists a feedback control $\Phi:\R^n \to \R^m$ and $C \in [1,\infty)$ such that $|\phi(k)|\leq C\gamma^k |x_0|$, for all $x_0\in \R^n$, all $\phi\in \cS_{F^{\Phi},x_0}$, all $k\in \N$.
    \item \emph{(mode-dep.)} there exist feedback controls $\Phi_1,\dots,\Phi_M:\R^n\to \R^m$  and $C \in [1,\infty)$ such that
$|\phi(k)|\leq C\gamma^k |x_0|$, for all $x_0\in \R^n$, all $\phi\in \cS_{F_d^{\Phi},x_0}$, all $k\in \N$.
\end{enumerate}
Moreover, the \emph{infimum} over all attainable mode-independent (respectively, mode-dependent) growth rates for system~\eqref{eq:SwitchedSystemInput} is denoted by $\gamma^I_*$ (respectively, $\gamma^D_*$).

Let us note that if $\gamma^I_*\in [0,1)$ (respectively, $\gamma^D_*\in [0,1)$) then the definition of stability in Definition~\ref{defn:StabNot} is satisfied. In this case, if $\gamma^I_*\in [0,1)$ (respectively, $\gamma^D_*\in [0,1)$) they are also known as \emph{best decay rate}.
We note that the design conditions presented in Corollary~\ref{cor:FTGModeDep} guaranteeing robust exponential stabilization do not explicitly include a parameter modeling the decay rate $\gamma$. On the other hand, if one desires to calculate a control strategy and a Lyapunov function guaranteeing an attainable rate of $\gamma$ for system~\eqref{eq:SwitchedSystemInput}, it suffices to solve the conditions in Corollary~\ref{cor:FTGModeDep} using the modified set of matrices $\cF_{\gamma}=\{(\frac{A_i}{\gamma},\frac{B_i}{\gamma})\in \R^{n\times n}\times \R^{n\times m}\;\vert\;i\in \M\}$ instead of $\cF$ (see, for example,~\cite[Lemma 2.1]{HuShen24}). We will thus use, in the following examples, the estimation of  $\gamma^I_*$, $\gamma^D_*$ provided by our conditions as a ``measure'' of the effectiveness of the proposed  techniques.

\subsection{Example 1 (Mode-ind.)}\label{example:2}

Consider system~\eqref{eq:SwitchedSystemInput} with matrices
\begin{gather*}
    A_1 = \begin{bmatrix}  \phantom{-}0 & 1 \\ -1 & 0  \end{bmatrix},  A_2 = \begin{bmatrix}  -1 & 0 \\ \phantom{-}0 & -0.95  \end{bmatrix},  B_1 = B_2 = \begin{bmatrix}  1 \\ 0  \end{bmatrix}.
\end{gather*}
In this example, we aim to illustrate the results of the two possible controller implementations, i.e., the PWL implementation of Section~\ref{sec:PWL}, and the memory/automata based implementation of Section~\ref{sub:MemoryCOntroller}. Solving the conditions~\eqref{eq:LMIConditionModeINDFTG} in Corollary~\ref{cor:FTGModeDep} for the modified set of matrices $\cF_{\gamma}$ with $\gamma=0.9606$ yields a feasible solution for the FTG of order $T=5$. The graph order can be further increased to achieve a smaller guaranteed decay rate (and vice-versa), as it will be illustrated in the next example. The solution for $T=5$ involves 63 matrices $P_{\wi}$ and $K_{\wi}$, which are not explicitly reported in the paper for brevity. The values of the Lyapunov functions $W(x):=\min_{\wi \in S_{M,T}}\{\sqrt{x^\top P_{\wi} x}\}$ (for the PWL implementation) and $V_{\wi(k)}(x(k)) = \sqrt{x^{\top}(k)P_{\wi(k)} x(k)}$ (for the memory/automata based implementation, recall Remark~\ref{rem:AutomataImpl}) evaluated at the trajectory starting in $x_0 = \begin{bmatrix}
    0 & -1 
\end{bmatrix}^{\top}$ for a particular switching sequence $\sigma(k)$ are shown in Fig.~\ref{ex2}. It is interesting to remark that the two implementations, although leading to a stable closed-loop with the same guaranteed convergence rate, result in different trajectories. Intuitively, this happens because the PWL-based implementation depends on the function $\kappa:\R^n\to S_{M,T}$ (introduced in Corollary~\ref{cor:FTGModeDep}), which selects, for each time $x(k)$, the control that minimizes the current ``energy" of the system measured by the Lyapunov function $W$. Such minimization allows for ``jumps" among the nodes of the FTG, therefore leading to different values for $W$ and $V$. From Fig.~\ref{ex2}, it is interesting to note that although such minimization happens at each time $k$, it does not guarantee any order relation between both implementations; i.e., it is not possible to ensure $W(x(k)) \leq V_{\wi(k)}(x(k)), \forall k \in \N$ (or vice versa) along the whole trajectory of the system. This happens because the minimization performed at time $k$ does not take into account the future of the trajectory, considering only the immediate actions.

\begin{figure}[ttb!]
\center{\includegraphics[width=0.7\linewidth,angle=0]{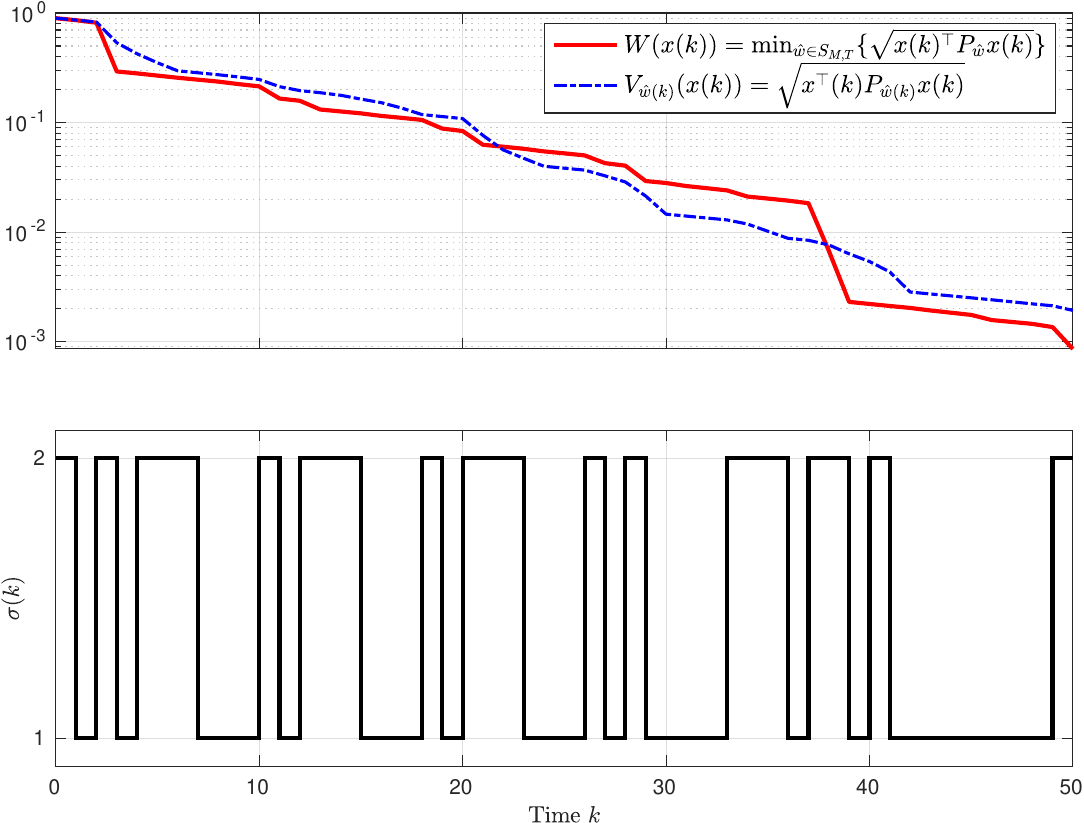}
\caption{$W(x(k))$, $V_{\wi(k)}(x(k))$, and the switching signal $\sigma(k)$ for \emph{Example~\ref{example:2}}, plotted in \emph{log} scale for improved visualization.}
\label{ex2}}
\end{figure}

\subsection{Example 2 (Mode-dep.)}\label{example:3} 
Consider system~\eqref{eq:SwitchedSystemInput} with matrices 
\begin{equation*}
   A_1 = \begin{bmatrix}
        1  & 0 \\
    1  & 1
    \end{bmatrix},\; A_2 = \begin{bmatrix}
         1 &   1 \\
   0   & 1
    \end{bmatrix},\; B_1 = \begin{bmatrix}
        1 \\ 0
    \end{bmatrix},\; B_2 = \begin{bmatrix}
        0 \\ 0
    \end{bmatrix}.
\end{equation*}
This bimodal system was recently studied in~\cite[Example 4.3]{HuShen24}, where it was demonstrated that $\gamma^D_* \approx
1.2493$. One can find upper bounds on $\gamma^D_*$ by solving the conditions~\eqref{eq:LMIConditionModeDependentFTG} of Corollary~\ref{cor:FTGModeDep} using $\cF_{\gamma}$ while performing a dichotomy/bisection search over $\gamma$. In Table~\ref{tab:exampleDecayRate}, we summarize the relationship between the FTG of order $T$ and such upper bounds. For comparison, the best upper bound solving an LMI design problem for mode-dependent controllers in~\cite[eq.~(13)]{HuShen24} was $\gamma^D_* \leq 1.4143$. \cite{HuShen24} also cites that the use of a design algorithm in~\cite[Theorem 4]{DAAFOUZ2001355} leads to $\gamma^D_* \leq 1.3305$. Remarkably, even the FTG of order $T=1$ leads to an improved bound in comparison with the aforementioned methods. On the other hand, solving the same numerical problem using the mode-dependent design LMIs based on De Bruijn graphs presented in~\cite[Corollary 18]{DelRosAlv24} leads to improved results in comparison to the performance obtained with the FTG, as summarized in Table~\ref{tab:exampleDecayRateDeBruijn}. To better illustrate the powerful approximate results of the conditions from~\cite[Corollary 18]{DelRosAlv24} (discussed in subsection~\ref{subsec:DeBrunjii}), we included one more significant digit.
Note that $\gamma^D_* \approx
1.2493$ is achieved by $T=6$. 
It thus turns out that for this specific numerical example the graph-based conditions of~\cite[Corollary 18]{DelRosAlv24} are providing a tighter estimation of the minimal attainable growth rate. We want to underline again that, despite this ``local'' phenomenon, these conditions are only known to be sufficient, while the conditions of~Proposition~\ref{cor:FTGModeDep} proposed in this paper have been proven to be (asymptotically) exact, recall Theorem~\ref{thm:necessityFTGmodedependent}. It is anyhow interesting to note that, despite this ``theoretical advantage'', in specific examples other sufficient conditions can perform better, at least when the length of the considered graphs is bounded.

\begin{table}[h!]
\centering
\caption{Relation between upper bound on $\gamma^D_*$ and FTG graph order $T$ for Example~\ref{example:3}.}
\begin{tabular}{l|lllll}
                           &\cellcolor{gray!20}   $T=1$ & \cellcolor{gray!20} $T=3$ & \cellcolor{gray!20} $T=5$ &\cellcolor{gray!20}  $T=8$ & \cellcolor{gray!20} $T=11$  \\ 
\hline
$\gamma^D_* \leq$ & ~1.3289  & ~1.3047  & ~1.2943  & ~1.2735   & ~1.2713  
\end{tabular}\label{tab:exampleDecayRate}
\end{table}
\vspace{-0.3cm}
\begin{table}[ht!]
\centering
\caption{Relation between upper bound on $\gamma^D_*$ and De Bruijn graph order $T$ from~\cite[Corollary 18]{DelRosAlv24} for Example~\ref{example:3}.}
\begin{tabular}{l|llll}
                           &\cellcolor{gray!20}   $T=1$ & \cellcolor{gray!20} $T=3$ & \cellcolor{gray!20} $T=5$ &\cellcolor{gray!20}  $T=6$\\ 
\hline
$\gamma^D_* \leq$ & 1.32472  & 1.25843  & 1.24952  & 1.24934 
\end{tabular}\label{tab:exampleDecayRateDeBruijn}
\end{table}
\begin{figure*}[h!]
\centering
\begin{tikzcd}
 {} & {\exists T\in \N \text{ s.t.~\eqref{eq:LMIConditionModeINDFTG} feas. }} & {} & {} & {} & {\exists\, T\in \N \text{ s.t. $\cG_{DB}^T$-LMIs}} \\
 {\text{Robust stab.}} & {\text{IFS (Def.~\ref{defn:StabNot})}} & {} & {\parbox{1.7cm}{\centering $\exists\, T\in \N$ s.t.\\ $T$-$\mathrm{SIL}_m$}}
 & {} & {\parbox{2.6cm}{\centering $\exists\, T\in \N$ s.t.\\ $T$-$\mathrm{SIL}_m$ time ind.}}
  \arrow["\text{Prop.~\ref{prop:MemoryImplication}}" {pos=0.6}, shift left=0, Leftrightarrow, from=1-2, to=2-4]
  \arrow[""{pos=0.6}, shift left=0, Leftrightarrow, from=2-4, to=2-2]
  \arrow[shorten <=2pt, shorten >=4pt, Rightarrow, from=2-6, to=2-4]
  \arrow["{\text{Thm. 9,~\cite{LeeKha09}}}", shift right=0.5cm, Leftrightarrow, from=2-6, to=1-6]
  \arrow["{\text{ Thm.~\ref{thm:necessityFTGmodedependent} }}", shift left=0, Leftrightarrow, from=2-2, to=1-2]
  \arrow["{\text{Prop.~\ref{Prop:RobustMachin} }}", shift left=0.5, Rightarrow, from=1-6, to=1-2]
  \arrow["{\text{ Thm.~\ref{thm:HomogenenousCharachterization} }}", shift left=0, Leftrightarrow, from=2-2, to=2-1]
\end{tikzcd}
\caption{Schematic collection of the main results in the mode-independent case. The mode-dependent case  scheme is similar and can be simply obtained by the previous one, \emph{mutatis mutandis}. }  
\label{fig:dfsfds}
\end{figure*}

\vspace{-0.3cm}
\section{Conclusions}\label{sec:conclusion}
In this paper, we studied the stabilization of discrete-time switched linear systems in which the switching signal is treated as arbitrary and unmodifiable.  Our main result provides a complete characterization of stabilizability via necessary and sufficient conditions expressed in terms of linear matrix inequalities (LMIs) constraints, whose structure is based on a novel class of graphs, called \emph{feedback trees}, leading to  the design of \emph{piecewise linear} controllers.
 This result also clarifies the connection between our approach and previously established techniques relying on memory-dependent controllers in the literature.
Our results, their connection with existing literature and possible open questions are summarized in the scheme presented in Figure~\ref{fig:dfsfds}.
Future research directions include exploring further refinements of the proposed framework and the application of the proposed stabilization techniques for broader classes of hybrid and nonlinear control system.

\bibliography{biblio} 
    \bibliographystyle{elsarticle-harv}

\end{document}